\newcommand{\C}{\mathbb{C}}
\newcommand{\Q}{\mathbb{Q}}
\newcommand{\Z}{\mathbb{Z}}
\newcommand{\SL}{\mathrm{SL}}
\newcommand{\End}[1]{\operatorname{End}\left(#1\right)}
\newcommand{\Hom}[3]{\operatorname{Hom}_{#1}\left(#2,#3\right)}
\DeclareMathOperator{\cb}{Cb}
\DeclareMathOperator{\kare}{KR}
\DeclareMathOperator{\id}{id}
\DeclareMathOperator{\spec}{Spec}
\DeclareMathOperator{\tr}{tr}
\DeclareDocumentCommand \funcalg { o } {%
  \IfNoValueTF {#1} {%
    \operatorname{F}_{\zeta}(\SL_2^*(\mathbb{C}))%
  }{%
    \operatorname{F}_{#1}(\SL_2^*(\mathbb{C}))%
  }%
}
\newcommand{\Fcent}{\mathcal{Z}_0}
\newcommand{\modcat}{\mathcal{C}}
\newcommand{\irrep}[1]{V_{#1}}
\newcommand{\rmat}{\mathcal{R}}
\newcommand{\repvar}[1]{\mathfrak{R}_{#1}}
\newcommand{\charvar}[1]{\mathfrak{X}_{#1}}
\newcommand{\apolycurve}[1]{\mathfrak{A}_{#1}}
\DeclareDocumentCommand{\repvarg}{m O{\ell} }{\mathfrak{R}^{(#2)}_{#1}}
\DeclareDocumentCommand{\charvarg}{m O{\ell} }{\mathfrak{X}^{(#2)}_{#1}}
\DeclareDocumentCommand{\apolycurveg}{m O{\ell} }{\mathfrak{A}^{(#2)}_{#1}}
\newcommand{\apoly}[1]{A_{#1}}
\DeclareDocumentCommand{\apolyg}{m O{\ell} }{A^{(#2)}_{#1}}
\newcommand{\knotgrp}{\pi_1}
\DeclareDocumentCommand{\knotgrpg}{O{\ell}}{\pi_1^{(#1)}}
\newcommand{\defeq}{\mathrel{:=}}
\newcommand{\iso}{\cong}
\newcommand{\trefoil}{\ensuremath{3_1}}
\newcommand{\figeight}{\ensuremath{4_1}}
\newtheorem{prop}{Proposition}[section]
\newtheorem{thm}[prop]{Theorem}
\newtheorem{lem}[prop]{Lemma}
\newtheorem{cor}[prop]{Corollary}
\theoremstyle{definition}
\newtheorem{defn}[prop]{Definition}
\newtheorem{rem}[prop]{Remark}
\title{Kashaev--Reshetikhin Invariants of Links}
\author[K.-C.\@ Chen]{Kai-Chieh Chen}
\author[C.\@ McPhail-Snyder]{Calvin McPhail-Snyder}
\author[S.\@ Morrison]{Scott Morrison}
\author[N.\@ Snyder]{Noah Snyder}
\begin{document}

\begin{abstract}
  Kashaev and Reshetikhin previously described a way to define holonomy invariants of knots using quantum $\mathfrak{sl}_2$ at a root of unity.
  These are generalized quantum invariants depend both on a knot $K$ and a representation of the fundamental group of its complement into $\mathrm{SL}_2(\mathbb{C})$;
  equivalently, we can think of $\mathrm{KR}(K)$ as associating to each knot a function on (a slight generalization of) its character variety.
  In this paper we clarify some details of their construction.
  In particular, we show that for $K$ a hyperbolic knot $\mathrm{KaRe}(K)$ can be viewed as a function on the geometric component of the $A$-polynomial curve of $K$.
  We compute some examples at a third root of unity.
\end{abstract}

\maketitle

\tableofcontents

\section{Introduction}
Kashaev and Reshetikhin used the braiding \cite{MR2184023} on quantum $\mathfrak{sl}_2$ at a root of unity to construct \cite{MR2131015} invariants of tangles equipped with flat $\mathfrak{sl}_2$-connections on their complements.
We refer to their invariant as a \emph{holonomy invariant} because it depends on both topology of the tangle $T$ and the \emph{holonomy representation} $\rho : \pi_1(S^3 \setminus T) \to \mathrm{SL}_2(\mathbb{C})$ associated to the flat $\mathfrak{sl}_2$-connection.
Our goal in this paper is to clarify some of the details in their construction in order to obtain a well-defined knot invariant, describe some of its properties, and to compute some examples.

\subsection{Reshetikhin--Turaev and Kashaev--Reshetikhin invariants}

Let's begin by recalling the Reshetikhin--Turaev \cite{MR1036112} prescription for producing knot invariants from representations of quantum groups, modified in a standard way so that it applies to representations of quantum dimension $0$.

\begin{enumerate}
\item Consider a knot $K$, a quantum group $U_q(\mathfrak{g})$, and an irreducible representation $V$.
\item Fix a Morse presentation of the string knot $K'$ obtained by cutting open $K$ at a point.
\item Break up $K'$ into a sequence of cups, caps, and crossings.
\item Interpret each elementary piece as a map between tensor products of $V$ and $V^*$, using the braiding (for crossings), evaluation, coevaluation, quantum trace, and quantum cotrace (for cups and caps, depending on orientation).
\item Compose all these maps giving a map $f: V\rightarrow V$.
  Because $V$ is irreducible, $f = \langle f \rangle \id_V$ is a multiple of the identity.
The scalar $\langle f \rangle$ is an invariant of $K$.
\end{enumerate}

The last step is necessary if the quantum dimension, i.e.\@ the quantum trace of $\id_V$, is zero, as it is in our examples.
There is a more sophisticated way to deal with this problem, the \emph{modified dimensions} of \citeauthor{MR2480500} \cite{MR2480500}.
We could use this theory to extend%
\footnote{
  In this paper we are working with the category of weight \emph{bimodules} over $\funcalg$.
  Computations of modified dimensions in the literature (such as \cite{MR3143580}) mostly consider the category of weight \emph{modules} using a different braiding.
  In principle this could make a significant difference, and it would be interesting to compute the bimodule case in detail.
}
our construction to links (since it shows that the invariants do not depend on which component we cut) but since we are mostly interested in knots we do not do this here.

The Kashaev--Reshetikhin construction is a modified version of this procedure for the quantum group $\funcalg[q]$ for $q = \zeta$ a root of unity.
The algebra $\funcalg[q]$ is a slight variant of the  Kac--De Concini unrestricted quantum group with a different normalization of the generators $E$ and $F$;
they are isomorphic whenever $q - q^{-1}$ is invertible.%
\footnote{
  Our change in normalization simplifies the correspondence between central characters and group elements given in Proposition \ref{thm:spec-z0-desc}.
  More abstractly, it is so that at $q = 1$ we obtain the algebra of functions on the group $\operatorname{SL}_2^*(\mathbb C)$ instead of the universal enveloping algebra $\mathcal{U}(\mathfrak{sl}_2)$.
}

The key difference with the RT construction is the braiding.
Instead of being an isomorphism between modules $V \otimes W \rightarrow W \otimes V$, the braiding is a map of modules ${V} \otimes {W} \rightarrow {W'} \otimes {V'}$ where $W'$ and $V'$ are new modules determined by $V$ and $W$.
Furthermore this braiding is only defined for a generic choice of $V$ and $W$.
(Here generic means that their central characters live in a specific Zariski open subspace of $\mathrm{Spec}(Z(\funcalg))=\SL_2^*(\C)$.)

Thus, before we can extract an invariant following the above process, we first need to fix a labeling of the knot diagram by generic representations of $F_\zeta(\SL_2(\C))$ compatible with the action of the braiding.
Kashaev and Reshetikhin observed that such a labeling is closely related to a choice of flat $\mathfrak{sl}_2$-connection in the complement of the knot, and it is well-known that a flat connection $\alpha$ on $M$ is essentially equivalent to a representation $\pi_1(M) \to \SL_2(\C)$ up to conjugacy.

\citeauthor{MR4073970} \cite{MR4073970} made this connection precise as follows.
Let $\zeta$ be a primitive $\ell$th root of unity.
Then a generic labelling of a diagram of a link $L$ in $S^3$ by representations of $\funcalg$ is exactly the same as a generic map in $\Hom{}{\knotgrpg}{\SL_2(\C)}$.
Here $\knotgrpg = \knotgrpg(L)$ is the $\ell$th generalized knot group of $L$; an element of $\Hom{}{\knotgrpg}{\SL_2(\C)}$ is roughly the same thing as a homorphism $\pi(S^3 \setminus L) \to \SL_2(\C)$ along with a choice of $\ell$th root%
\footnote{
  More accurately, it is an $\ell$th root when $\ell$ is odd.
  When $\ell$ is even it is instead a $(\ell/2)$th root and there is an extra sign.
  See Definition \ref{def:knotgrpg} for details.
}
of each meridian.
This can be formalized as a \emph{generic biquandle factorization} of the \emph{conjugation quandle} associated with $\SL_2(\C)$.

In summary, the Kashaev-Reshetikhin invariant ($\mathrm{KaRe}(K)$ for short) is a rational function on the affine variety $\Hom{}{\knotgrpg}{\mathrm{SL}_2(\mathbb{C})}$ which can be produced by the following recipe:

\begin{enumerate}
  \item Fix a Morse presentation of the $(1,1)$-tangle (string knot) $K'$ obtained by cutting open $K$ at a point.
\item Use the representation of the generalized knot group and the factorization map to assign to each edge an element of $\SL_2^*(\C)$ together with a choice of $r$th root for its eigenvalues.
  (In some cases we might have to conjugate the representation $\rho$ first, but this can always be done.)
\item Use the inverse of the central character map to give a labeling of the edges by irreducible representations of the quantized ring of functions $\funcalg$.
\item Replace the irrep $V$ by the simple bimodule $\End{V}$.
\item Use the formulas for braiding, cup, and cap to interpret $K'$ as a map of simple $\funcalg$-bimodules.
  The generic condition assures that the braiding is well-defined.
\item Since the bimodule is simple, this gives a number, the value of the Kashaev-Reshetikhin invariant for that point in  $\Hom{}{\knotgrpg}{\mathrm{SL}_2(\mathbb{C})}$.
\end{enumerate}
As with the RT construction, this process gives a functor from a category of $G$-graded tangle diagrams to a braided monoidal category.
It is a special case of the functors defined in \cite{MR2131015}.
The modification using $(1,1)$-tangle diagrams can be understood in terms of the \emph{quasi-functors} of \cite[Section 4]{MR2480500}.

\subsection{Geometric interpretation}
The generalized representation variety $\repvarg{K} = \Hom{}{\knotgrpg}{\mathrm{SL}_2(\mathbb{C})}$ is a complicated space (it is a finite-sheeted cover of the representation variety of $K$) so our construction produces a rather complicated invariant.
By looking more closely at the structure of $\repvarg{K}$ we can simplify it.
Because $\kare(K)$ is constant on the orbits of the conjugation action of $\SL_2(\C)$, we can produce a rational function on the (extended) character variety, i.e.\@ the GIT quotient $\charvarg{K} = \repvarg{K}//\SL_2(\C)$.

The character variety $\charvar K$ (and its extended version $\charvarg K$) is still somewhat complicated.
One way to give it a simpler description is to use the peripheral subgroup of $\pi_1$ generated by the longitude and meridian.
By considering the eigenvalues $L^{\pm 1}$ and $M^{\pm 1}$ of these two elements we get a subvariety of $\C P^2$ cut out by the \emph{$A$-polynomial} \cite{MR1628754,MR1288467}, a Laurent polynomial $\apoly K(M,L)$.
When our knot $K$ is hyperbolic $\charvar K$ has a distinguished \emph{geometric component}, and it turns out \cite[Theorem 3.1]{MR1695208} that this component is birationally equivalent to a component $\apolycurve K$ of the $A$-polynomial curve.

The same story works for the generalized character variety $\charvarg K$ and a simple generalization $\apolyg K(\mu, L)$ of the $A$-polynomial; here the variable $\mu$ is an $\ell$th root%
\footnote{As before, the situation is slightly more complicated when $\ell$ is even.}
of the variable $M$ of the $A$-polynomial.
In particular, we can think of our invariant $\kare$ as defining a rational function on the curve $\apolycurveg K$ cut out by the generalized $A$-polynomial 
of $M$.
This gives a simpler invariant of $K$ that still incorporates geometric information.

When $K$ is hyperbolic it (by definition) has a representation $\rho_{\text{hyp}} \in \repvar K$ corresponding to the complete finite-volume hyperbolic structure on the complement of $K$.
This representation is unique up to conjugation, so it gives a distinguished point of $\charvar K$.
The geometric component of $\charvar K$ is the component containing $\rho_{\text{hyp}}$.
The value $\kare(K, \rho_{\text{hyp}})$ of $\kare$ at this point is a scalar invariant of $K$ related to the hyperbolic geometry of $S^3 \setminus K$.

\subsection{Relationship with other work}
This article is based on an unfinished preprint by SM and NS (2010) and the PhD thesis \cite{KCCThesis} of KCC (2019).
The final version of the article was prepared by CMS.

In parallel to this work, \citeauthor{MR4073970} gave \cite{MR4073970} a construction of holonomy invariants closely related to the original \cite{MR2131015} Kashaev--Reshetikhin invariants, including the details of the generic correspondence between points of $\Hom{}{\knotgrpg}{\SL_2(\C)}$ and labeled diagrams.
They used this to describe a holonomy invariant of links denoted $\operatorname{BGPR}$.
CMS showed \cite{2005.01133} that for $\zeta = i$ the quantum double (a sort of norm-square) of $\mathrm{BGPR}(L, \rho)$ is equal to the Reidemeister torsion of $S^3 \setminus L$ twisted by $\rho$.

The BGPR construction uses simple modules $V$, while our invariant is defined using simple bimodules $\End{V}$.
From this, one might expect (up to normalization)
\[
  \operatorname{BGPR}(K,\rho)
  \operatorname{BGPR}(\overline{K},\rho)
  =
  \kare(K,\rho)
\]
where $\overline{K}$ is the mirror image of $K$.
However, this equation does not appear to hold, at least in this simple form.
It would be interesting to better understand the relationship between $\operatorname{BGPR}$ and $\kare$.

\section*{Acknowledgements}
The authors would all like to thank Nicolai Reshetikhin for introducing them to holonomy invariants and for many helpful suggestions throughout the project.
We also thank Ian Agol for some clarifying remarks about geometric structures on knot complements.
NS was supported by NSF DMS grant number 2000093.
SM and NS would like to thank Microsoft Station Q for hospitality during a visit which started their collaboration on this paper.

\section{Quantum \texorpdfstring{$\mathfrak{sl}_2$}{sl2} at a root of unity}
\subsection{The quantized function algebra}
\begin{defn}
  Let $\SL_2(\C)$ be the usual special linear group, i.e.\@ the group of $2 \times 2$ complex matrices with determinant $1$.
  The \emph{Poisson dual group}\footnote{$\SL_2(\C)$ is a Poisson--Lie group, so $\mathfrak{sl}_2$ is a Poisson--Lie bialgebra. Taking the dual of this structure and integrating to a group gives $\SL_2^*(\C)$.} of $\SL_2(\C)$ is the group
  \[
    \SL_2^*(\C) \defeq \left\{
      \left(
      \begin{pmatrix}
        \kappa & 0 \\
        \phi & 1
      \end{pmatrix},
      \begin{pmatrix}
        1 & \epsilon \\
        0 & \kappa
      \end{pmatrix}
    \right)
    \, \middle | \,
    \kappa \ne 0
  \right\}
  \subseteq
  \mathrm{GL}_2(\C) \times \mathrm{GL}_2(\C).
  \]
\end{defn}
The quantized algebra of functions on this group is the key ingredient in our construction.

\begin{defn}
  Let $\funcalg[q]$ be the algebra over $\Q[q,q^{-1}]$ generated by $E$, $F$, and $K^{\pm 1}$, subject to the following relations:
  \begin{gather*}
    KE=q^2 EK, \\
    KF=q^{-2}FK\text{, and} \\
    [E,F]=(q-q^{-1})(K-K^{-1}).
  \end{gather*}

This algebra is a Hopf algebra with the coproduct defined by:
$$\Delta(K)=K\otimes K,$$
$$\Delta(E)=E\otimes K + 1 \otimes E\text{, and}$$
$$\Delta(F)=F\otimes 1 + K^{-1} \otimes F.$$

The counit and antipode are given by, $\varepsilon(K)=1$, $\varepsilon(E)=\varepsilon(F)=0$, $S(K)=K^{-1}$, $S(E)=EK^{-1}$, and $S(F)=KF$.
\end{defn}
This algebra is almost identical to the unrestricted quantum group of De Concini and Kac.
Its main difference is the specialization at $q=1$, where the De Concini--Kac form has been modified to yield $U(\mathfrak{sl}_2)$, while our algebra specializes to the ring of functions $\funcalg[]$.

This algebra can be specialized to any point of $\mathrm{Spec}(\Q[q,q^{-1}])$; in particular we can specialize $q$ to any nonzero complex number.
We care about the specializations of the form $\funcalg$ for $\zeta$ a primitive $\ell$th root of unity.
From now on, we take $\zeta$ to be such a root for odd%
\footnote{
  When $\ell = 2r$ is even, we instead use $r$th powers of the generators and there are some extra signs.
  Both even and odd $\ell$ are considered in \cite[Section 6]{MR4073970}, and \cite{McPhailSnyderThesis} focuses on the even case.
}
$\ell \ge 3$.

The key fact about $\funcalg$ is that is has a large center:
\begin{defn}
  Let $\Fcent$ be the subalgebra of  $\funcalg$ generated by $E^\ell, F^\ell, K^{\pm \ell}$.
\end{defn}

\begin{prop}
  \label{thm:spec-z0-desc}
  $\Fcent$ is a central Hopf subalgebra.
  Futhermore, $\spec{\Fcent}$ is isomorphic as an algebraic group to $\SL_2^*(\C)$, via the following recipie for thinking of elements of $\SL_2^*(\C)$ as functions on $\Fcent$:
  \begin{equation}
    \label{eq:char-to-mat}
      \left(
      \begin{pmatrix}
        \kappa & 0 \\
        \phi & 1
      \end{pmatrix},
      \begin{pmatrix}
        1 & \epsilon \\
        0 & \kappa
      \end{pmatrix}
    \right)
    \leftrightarrow
    \begin{aligned}
      E^\ell &\mapsto \epsilon \\
      F^\ell & - \phi/\kappa \\
      K^\ell &\mapsto \kappa
    \end{aligned}
  \end{equation}
\end{prop}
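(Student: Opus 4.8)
The plan is to treat the two assertions in turn: that $\Fcent$ is a central Hopf subalgebra of $\funcalg$, and that the group structure induced on $\spec\Fcent$ by the coproduct agrees, under the recipe \eqref{eq:char-to-mat}, with the group law of $\SL_2^*(\C)$.

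\emph{Centrality.} From $KE=\zeta^2 EK$ we get $K^{\pm\ell}E=\zeta^{\pm 2\ell}EK^{\pm\ell}=EK^{\pm\ell}$ since $\zeta^{2\ell}=1$, and similarly $K^{\pm\ell}$ commutes with $F$, so $K^{\pm\ell}$ is central. For $E^\ell$ and $F^\ell$ the key input is a commutator formula, which I would establish by induction on $n$ from $[E,F]=(\zeta-\zeta^{-1})(K-K^{-1})$ together with $KE^n=\zeta^{2n}E^nK$:
\[ [E^n,F]=(\zeta-\zeta^{-1})\,E^{n-1}\bigl(a_n K - b_n K^{-1}\bigr),\qquad a_n=\sum_{k=0}^{n-1}\zeta^{2k},\quad b_n=\sum_{k=0}^{n-1}\zeta^{-2k}. \]
Because $\ell$ is odd, $\zeta^2$ is again a primitive $\ell$th root of unity, so $a_\ell=b_\ell=0$ and hence $[E^\ell,F]=0$; the symmetric argument gives $[E,F^\ell]=0$. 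Combined with the $K$-relations, this shows $E^\ell,F^\ell,K^{\pm\ell}$ are central and pairwise commuting, so $\Fcent$ is a central subalgebra.

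\emph{Hopf structure.} I would then check that $\Delta,\varepsilon,S$ preserve $\Fcent$ by evaluating them on generators. Since $(E\otimes K)(1\otimes E)=\zeta^2(1\otimes E)(E\otimes K)$, the two summands of $\Delta(E)=E\otimes K+1\otimes E$ satisfy a $\zeta^2$-commutation relation, so the quantum binomial theorem applies to $\Delta(E)^\ell$; the Gaussian binomial coefficients $\binom{\ell}{k}_{\zeta^2}$ vanish for $0<k<\ell$ (again because $\zeta^2$ is a primitive $\ell$th root of unity), leaving $\Delta(E^\ell)=E^\ell\otimes K^\ell+1\otimes E^\ell$. The same computation gives $\Delta(F^\ell)=F^\ell\otimes 1+K^{-\ell}\otimes F^\ell$ and $\Delta(K^\ell)=K^\ell\otimes K^\ell$, all lying in $\Fcent\otimes\Fcent$. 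For the antipode, $EK^{-1}=\zeta^2K^{-1}E$ yields $S(E^\ell)=(EK^{-1})^\ell=\zeta^{-\ell(\ell-1)}E^\ell K^{-\ell}=E^\ell K^{-\ell}$, and likewise $S(F^\ell)=K^\ell F^\ell$ and $S(K^\ell)=K^{-\ell}$, while the counit is scalar-valued. Hence $\Fcent$ is a central Hopf subalgebra.

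\emph{The group isomorphism.} A PBW-type basis for $\funcalg$ shows that $E^\ell,F^\ell,K^{\pm\ell}$ are algebraically independent apart from the invertibility of $K^\ell$, so $\Fcent\iso\C[x,y,z^{\pm1}]$ with $x=E^\ell$, $y=F^\ell$, $z=K^\ell$, and \eqref{eq:char-to-mat} is the coordinate change $x\mapsto\epsilon$, $y\mapsto-\phi/\kappa$, $z\mapsto\kappa$, giving an isomorphism of varieties $\SL_2^*(\C)\to\spec\Fcent$. To upgrade this to an isomorphism of algebraic groups I would verify that this identification intertwines the group law on $\SL_2^*(\C)$ with the one on $\spec\Fcent$ defined by $(g*h)(a)=(g\otimes h)(\Delta a)$. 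Using the coproduct formulas above, the product of the characters with coordinates $(\kappa_g,\phi_g,\epsilon_g)$ and $(\kappa_h,\phi_h,\epsilon_h)$ has $\kappa=\kappa_g\kappa_h$, $\epsilon=\epsilon_g\kappa_h+\epsilon_h$, and $\phi=\phi_g\kappa_h+\phi_h$, which is precisely the componentwise product in $\mathrm{GL}_2(\C)\times\mathrm{GL}_2(\C)$ of the corresponding pairs of matrices. I expect the commutator computation of the first step to be the main obstacle, as it is the one place requiring genuine quantum-group input; the crucial hypothesis throughout is that $\ell$ is odd, which is exactly what makes $\zeta^2$ a primitive $\ell$th root of unity and forces both the sums $a_\ell,b_\ell$ and the Gaussian binomials $\binom{\ell}{k}_{\zeta^2}$ to vanish.
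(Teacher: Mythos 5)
The paper states this proposition without proof (it is a standard fact from the De Concini--Kac theory of quantum groups at roots of unity, here transported to the function-algebra normalization), so there is no in-paper argument to compare against. Your proposal is correct and supplies exactly the computations one would want: the commutator formula $[E^n,F]=(\zeta-\zeta^{-1})E^{n-1}(a_nK-b_nK^{-1})$ checks out, and the vanishing of $a_\ell,b_\ell$ and of the Gaussian binomials $\binom{\ell}{k}_{\zeta^2}$ both correctly hinge on $\zeta^2$ being a primitive $\ell$th root of unity, which is where the hypothesis that $\ell$ is odd enters. I verified the group-law computation as well: $(g*h)(F^\ell)=g(F^\ell)+\kappa_g^{-1}h(F^\ell)$ translates under $F^\ell\mapsto-\phi/\kappa$ into $\phi=\phi_g\kappa_h+\phi_h$, matching the lower-triangular matrix product, and likewise $\epsilon=\epsilon_g\kappa_h+\epsilon_h$ matches the upper-triangular one; this is precisely why the paper's normalization includes the factor $-1/\kappa$ in the $F^\ell$ coordinate. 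The only step left at the level of assertion is the algebraic independence of $E^\ell,F^\ell,K^{\pm\ell}$ via a PBW basis, which is standard and appropriate to cite rather than prove.
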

We will identify (closed, complex) points of $\SL_2^*(\C)$ with $\Fcent$-characters, that is homomorphisms $\chi : \Fcent \to \C$.
In this case the above recipie associates $\chi$ to the group element
\[
      \left(
      \begin{pmatrix}
        \chi(K^\ell) & 0 \\
        - \chi(K^\ell F^\ell) & 1
      \end{pmatrix},
      \begin{pmatrix}
        1 & \chi(E^\ell) \\
        0 & \chi(K^\ell)
      \end{pmatrix}
    \right) \in \SL_2^*(\C).
\]

\begin{defn}
  The \emph{Casimir element} of $\funcalg$ is
  \[
    \Omega = EF + \zeta^{-1} K + \zeta K^{-1}.
  \]
\end{defn}

\begin{thm}
  \label{thm:casimir-relation}
  The center of $\funcalg$ is generated by $\Fcent$ and $\Omega$.
  The only polynomial relation between the generators is
  \begin{align*}
    E^\ell F^\ell &= \prod_{m = 0}^{\ell -1} (\Omega - \zeta^k K - \zeta^{-k} K^{-1})
    \intertext{equivalently}
    \cb_\ell(\Omega) &= K^\ell + K^{-\ell} + E^r F^r
  \end{align*}
  where $\cb_\ell$ is the $\ell$th renormalized Chebyshev polynomial, determined by $\cb_\ell(2 \cos \theta) = 2 \cos(\ell \theta)$.
\end{thm}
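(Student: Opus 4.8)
The plan is to treat the statement in three parts: centrality of $\Omega$, the explicit product (equivalently Chebyshev) relation, and the assertion that $\Fcent$ and $\Omega$ generate the center with no further relations. Centrality of $\Omega$ is a direct check. Since $KEF = \zeta^2 EKF = EFK$ we have $[K,EF]=0$, and as $\Omega$ differs from $EF$ only by a polynomial in $K^{\pm1}$ this gives $[K,\Omega]=0$. For $[E,\Omega]$ and $[F,\Omega]$ one substitutes $[E,F]=(\zeta-\zeta^{-1})(K-K^{-1})$ together with $KE=\zeta^2 EK$ and $KF=\zeta^{-2}FK$; the resulting $EK$ and $EK^{-1}$ (respectively $KF$, $K^{-1}F$) terms cancel in pairs, so $\Omega$ is central.

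For the product relation I would induct on $j$ to compute $E^jF^j$. The base case is $EF=\Omega-\zeta^{-1}K-\zeta K^{-1}$, which is just the definition of $\Omega$. For the inductive step, insert $EF$ in the middle of $E^jF^j=E^{j-1}(EF)F^{j-1}$, pull the central $\Omega$ out, and slide the factors $K^{\pm1}$ through $E^{j-1}$ and $F^{j-1}$ using the commutation relations, noting that $K$ commutes with $E^{j-1}F^{j-1}$ because $KE^{j-1}F^{j-1}K^{-1}=\zeta^{2(j-1)}\zeta^{-2(j-1)}E^{j-1}F^{j-1}$. This yields
\[
  E^jF^j = E^{j-1}F^{j-1}\bigl(\Omega-\zeta^{1-2j}K-\zeta^{2j-1}K^{-1}\bigr),
\]
whence $E^\ell F^\ell=\prod_{j=1}^{\ell}\bigl(\Omega-\zeta^{1-2j}K-\zeta^{2j-1}K^{-1}\bigr)$. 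Because $\ell$ is odd, $j\mapsto 1-2j$ is a bijection of $\Z/\ell$, so modulo $\ell$ the exponents $1-2j$ sweep out all residues; using $\zeta^\ell=1$ this is exactly $\prod_{k=0}^{\ell-1}(\Omega-\zeta^kK-\zeta^{-k}K^{-1})$.

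To pass to the Chebyshev form I regard the identity as a statement about the monic degree-$\ell$ polynomial $\cb_\ell$. Writing $K=e^{\mathrm{i}\phi}$ one has $\zeta^kK+\zeta^{-k}K^{-1}=2\cos(\phi+2\pi k/\ell)$ and $\cb_\ell(2\cos(\phi+2\pi k/\ell))=2\cos(\ell\phi)=K^\ell+K^{-\ell}$. Thus the $\ell$ elements $\zeta^kK+\zeta^{-k}K^{-1}$ are precisely the roots of the monic polynomial $x\mapsto\cb_\ell(x)-(K^\ell+K^{-\ell})$, so $\prod_{k=0}^{\ell-1}(x-\zeta^kK-\zeta^{-k}K^{-1})=\cb_\ell(x)-K^\ell-K^{-\ell}$ as polynomials in $x$ over $\C[K^{\pm1}]$ (the two monic sides agree upon substituting $K=e^{\mathrm{i}\phi}$ for all $\phi$, hence identically). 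Setting $x=\Omega$ and combining with the product formula gives $\cb_\ell(\Omega)=K^\ell+K^{-\ell}+E^\ell F^\ell$.

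Finally, for the structural claim I would work in the PBW basis $\{E^aF^bK^c\}$. Commuting with $K$ forces $a\equiv b\pmod{\ell}$; reducing modulo $\Fcent$ to the range $0\le a,b,c<\ell$ and then imposing commutation with $E$ and $F$ should leave only $\Fcent$-linear combinations of powers of $\Omega$, so $Z(\funcalg)=\Fcent[\Omega]$. The main obstacle is to show simultaneously that this exhausts the center and that the Casimir relation is the only relation, i.e.\ that the surjection $\Fcent[X]/(\cb_\ell(X)-K^\ell-K^{-\ell}-E^\ell F^\ell)\to Z(\funcalg)$, $X\mapsto\Omega$, is an isomorphism. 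Since $\cb_\ell$ is monic of degree $\ell$ the domain is free of rank $\ell$ over $\Fcent$, so it suffices to show $Z(\funcalg)$ has rank $\ell$ over $\Fcent$. The cleanest route I see is representation-theoretic: over the Azumaya locus, at a generic point of $\spec\Fcent\cong\SL_2^*(\C)$ the algebra $\funcalg$ is a full matrix algebra on an $\ell$-dimensional irreducible, and the $\ell$ (generically distinct) roots of the Casimir relation index exactly the irreducibles lying over that central character, so $\spec Z(\funcalg)\to\spec\Fcent$ is generically $\ell$-to-$1$. Turning this count into a rigorous rank computation — via the PI-degree and Azumaya property of $\funcalg$, or else by a longer but elementary analysis of the equations $[z,E]=[z,F]=0$ in the PBW basis — is the delicate step; the rest is formal.
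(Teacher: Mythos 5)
The paper states this theorem without proof: it is the standard De Concini--Kac description of the center of the unrestricted quantum group at an odd root of unity, transported to the renormalized algebra $\funcalg$. Measured against that, the computational half of your proposal is complete and correct: the cancellation verifying $[E,\Omega]=[F,\Omega]=0$, the induction giving $E^jF^j=E^{j-1}F^{j-1}(\Omega-\zeta^{1-2j}K-\zeta^{2j-1}K^{-1})$, the reindexing of exponents using that $2$ is invertible modulo the odd number $\ell$, and the identification of $\prod_{k}(x-\zeta^kK-\zeta^{-k}K^{-1})$ with $\cb_\ell(x)-K^\ell-K^{-\ell}$ are all right. (You have also silently corrected two typos in the statement: the product index should be $k$ rather than $m$, and the Chebyshev form should read $E^\ell F^\ell$, not $E^rF^r$.) This is already more than the paper supplies.

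The one genuine gap is the structural clause --- that $\Fcent$ and $\Omega$ generate the center and that the Casimir relation generates the entire relation ideal --- and you flag it yourself. Two cautions about the route you propose. First, the generic rank count (over $\operatorname{Frac}(\Fcent)$ the algebra splits as a sum of $\ell$ matrix algebras, so $Z\otimes\operatorname{Frac}(\Fcent)$ has dimension $\ell$) identifies $Z$ only up to a finite extension of $\Fcent[\Omega]$ inside the fraction field; to get equality you need an integrality argument, which is exactly the elementary PBW computation you mention: since $\funcalg$ is free over $\Fcent$ on $\{E^aF^bK^c\}_{0\le a,b,c<\ell}$, a central $z$ has coefficients in $\Fcent$, commutation with $K$ forces $a=b$ in every surviving monomial, your product formula rewrites each $E^aF^a$ as a polynomial in $\Omega$ and $K^{\pm1}$, and commutation with $E$ then kills the residual $K$-dependence. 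Second, for ``only relation'' you still need that $1,\Omega,\dots,\Omega^{\ell-1}$ are linearly independent over $\Fcent$, so that the kernel of $\Fcent[X]\to Z$, $X\mapsto\Omega$, contains no polynomial of degree less than $\ell$ and is therefore exactly the principal ideal generated by the monic Casimir polynomial; this follows from the $\ell$ generically distinct Casimir eigenvalues on the irreducibles $\irrep{\chi,\mu}$ lying over a fixed generic $\chi$. Neither step is difficult, but as written the proposal proves the relation and only sketches why it is the only one and why nothing else is central.
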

Essentially, this says that the spectrum of the center of $\funcalg$ is a $\ell$-fold cover of $\spec \Fcent = \SL_2^*(\C)$, so irreducible representations of $\funcalg$ are parametrized by points of $\SL_2^*(\C)$ along with a choice of $r$th root.
This explains why we use representations of the generalized knot group instead of the usual knot group.

\subsection{The Casimir and eigenvalues}
\begin{defn}
  Let $\chi$ be a $\Fcent$-character, equivalently a point of $\SL_2^*(\C)$.
  We write
  \begin{align*}
    \psi(\chi) &=
    \begin{pmatrix}
      \chi(K^\ell) & 0 \\
      - \chi(K^\ell F^\ell) & 1
    \end{pmatrix}
    \begin{pmatrix}
      0 & \chi(E^\ell) \\
      0 & \chi(K^\ell)
    \end{pmatrix}^{-1} \\
    &=
    \begin{pmatrix}
      \chi(K^\ell) & - \chi(E^\ell) \\
      - \chi(K^\ell F^\ell) & \chi(K^{-\ell} + E^\ell F^\ell)
    \end{pmatrix}
  \end{align*}
  for the \emph{defactorization} of $\chi$.
  By abuse of notation, we set
  \begin{align*}
    \tr \chi = \tr \psi (\chi) = \chi(K^\ell + K^{-\ell} + E^\ell F^\ell)
  \end{align*}
\end{defn}
Later in Section \ref{sec:colored-tangle-diagrams} we will associate a strand of a knot diagram labeled by $\chi$ with (a conjugate of) the matrix $\psi(\chi)$.
In particular, we are interested in the eigenvalues of $\psi(\chi)$.
The choice of an $r$th root of these eigenvalues is closely related to the action of the Casimir:
\begin{prop}
  Let $V$ be an irreducible $\funcalg$-module.
  Then $\Fcent$ acts on $V$ by some character $\chi$.
  Furthermore, if $\tr \chi = \lambda + \lambda^{-1}$, then $\Omega$ acts on $V$ by $\mu + \mu^{-1}$, where $\mu^\ell = \lambda$.
\end{prop}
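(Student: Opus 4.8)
The plan is to reduce everything to a statement about scalars using Schur's lemma, and then read off the action of $\Omega$ from the Chebyshev relation of Theorem~\ref{thm:casimir-relation}. Since $\Fcent$ is central in $\funcalg$ and $V$ is irreducible, each generator $E^\ell, F^\ell, K^{\pm\ell}$ must act on $V$ by a scalar; collecting these scalars is precisely the character $\chi$, which gives the first assertion. By the same theorem $\Omega$ lies in the center, so Schur's lemma forces it to act on $V$ by a single scalar $c \in \C$. The real content of the proposition is to identify this $c$ in terms of $\lambda$.

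Next I would apply the identity $\cb_\ell(\Omega) = K^\ell + K^{-\ell} + E^\ell F^\ell$ from Theorem~\ref{thm:casimir-relation} as an operator equation on $V$. The left-hand side acts by $\cb_\ell(c)$, while the right-hand side acts by the scalar $\chi(K^\ell + K^{-\ell} + E^\ell F^\ell)$, which is exactly $\tr \chi = \lambda + \lambda^{-1}$ by the definition of the defactorized trace. Hence $c$ satisfies the single scalar equation $\cb_\ell(c) = \lambda + \lambda^{-1}$, and no further representation-theoretic information is needed.

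Finally I would solve this equation using the defining property $\cb_\ell(2\cos\theta) = 2\cos(\ell\theta)$, which upon setting $\mu = e^{i\theta}$ becomes the polynomial identity $\cb_\ell(\mu + \mu^{-1}) = \mu^\ell + \mu^{-\ell}$. Every complex number is of the form $\mu + \mu^{-1}$ for some $\mu \in \C^\times$, since the two roots of $\mu^2 - c\mu + 1 = 0$ are nonzero and mutually inverse, so I would choose $\mu$ with $\mu + \mu^{-1} = c$. Then $\mu^\ell + \mu^{-\ell} = \cb_\ell(c) = \lambda + \lambda^{-1}$, and because $t^2 - (\lambda + \lambda^{-1})t + 1 = (t - \lambda)(t - \lambda^{-1})$, this forces $\mu^\ell \in \{\lambda, \lambda^{-1}\}$. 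If $\mu^\ell = \lambda^{-1}$ I replace $\mu$ by $\mu^{-1}$, which leaves $\mu + \mu^{-1} = c$ unchanged, and in either case I obtain $c = \mu + \mu^{-1}$ with $\mu^\ell = \lambda$.

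The one genuine subtlety, and the step I would be most careful about, is this last $\lambda \leftrightarrow \lambda^{-1}$ ambiguity: the central relation only determines $\cb_\ell(c)$, so it pins $c$ down only up to the fiber of $\mu \mapsto \mu + \mu^{-1}$ lying over the possible $\ell$th roots, and one must observe that the symmetry $\mu \mapsto \mu^{-1}$ of that fiber is exactly what permits the normalization $\mu^\ell = \lambda$. Everything else is a formal consequence of Schur's lemma together with the Chebyshev relation, consistent with the statement's own symmetry under $\lambda \leftrightarrow \lambda^{-1}$.
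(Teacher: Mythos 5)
Your proof is correct and follows essentially the same route as the paper's: Schur's lemma for the first claim, then the Chebyshev relation $\cb_\ell(\Omega) = K^\ell + K^{-\ell} + E^\ell F^\ell$ evaluated on $V$ to pin down the scalar by which $\Omega$ acts. The only difference is cosmetic --- the paper parametrizes the scalar as $\zeta^\alpha + \zeta^{-\alpha}$ and simply defines $\lambda = \zeta^{\ell\alpha}$, while you handle the $\lambda \leftrightarrow \lambda^{-1}$ normalization explicitly, which is a reasonable extra care but not a different argument.
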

\begin{proof}
  The first claim is Schur's lemma.
  For the second, suppose that $\Omega$ acts on $V$ by $\omega = \zeta^{\alpha} + \zeta^{-\alpha} = 2 \cos(\pi \alpha /\ell)$ for some $\alpha \in \C$.
  Then by Theorem \ref{thm:casimir-relation},
  \begin{align*}
    \tr \chi
    = \chi(\cb_\ell(\Omega))
    = \cb_r(2 \cos (\pi \alpha /\ell))
    = 2 \cos(\pi \alpha)
    = \zeta^{\ell \alpha} + \zeta^{- \ell \alpha}
  \end{align*}
  and we see that $\mu = \zeta^{\alpha}$, $\lambda = \zeta^{\ell \alpha}$.
\end{proof}

\section{Representations of the quantum group}
\label{sec:qgrp-reps}
As with any algebra, representations of $\funcalg$ are parametrized by the spectrum of its center.
As seen in the previous section, the center of $\funcalg$ is an $r$-fold cover of the algebraic group $\SL_2^*(\C)$.
We want to restrict to a certain family of well-behaved modules parametrized by this group.

\begin{defn}
  Let $V$ be a  $\funcalg$-module.
  We say it is a \emph{weight module} if the center $\Fcent[\Omega]$ of $\funcalg$ acts diagonalizably on $V$.
  We write $\modcat$ for the category of finite-dimensional $\funcalg$ weight modules, and $\modcat_\chi$ for the subcategory of modules for which $\Fcent$ acts by the character $\chi \in \SL_2^*(\C)$.
\end{defn}

\begin{defn}
  Let $\chi$ be a $\Fcent$-character, and let $\mu$ satisfy $\tr \chi = (\mu^\ell + \mu^{-\ell})$; in this case we call $\mu$ a \emph{fractional eigenvalue} for $\chi$.
  Write $\beta^r = \chi(K^\ell)$ and $\epsilon = \chi(E^\ell)$.
  We denote by $\irrep{\chi, \mu}$ the $\funcalg$-module with basis $v_0, \dots, v_{\ell-1}$ and action
  \begin{align*}
    K \cdot v_k &= \beta \zeta^{2k} v_k & \Omega \cdot v_k &= (\mu + \mu^{-1}) v_k & E \cdot v_k &=
    \begin{cases}
      v_{k+1} & 0 \le k < \ell-1, \\
      \epsilon v_0 & k = \ell-1
    \end{cases}
  \end{align*}
  This presentation depends on a choice of $\ell$th root $\beta$ of $\chi(K^\ell)$, but this choice does not affect the isomorphism class of $\irrep{\chi, \mu}$ so we usually ignore it.
  Similarly $\irrep{\chi, \mu} \iso \irrep{\chi, \mu^{-1}}$.
\end{defn}
The representations $\irrep{\chi, \mu}$ are isomorphic to the \emph{cyclic representations} of \cite[Section 4.1]{KCCThesis}, although we have given a slightly different description of them.
They are also the same as the \emph{cyclic modules} of \cite[Section 6]{MR4073970}, although we include the cases $\tr \chi = \pm 2$.

\begin{prop}
  The representations $\irrep{\chi, \mu}$ are irreducible $\ell$-dimensional weight modules for $\funcalg$.
\end{prop}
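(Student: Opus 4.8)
The plan is to verify directly from the explicit action given in the definition that $\irrep{\chi,\mu}$ is (i) $\ell$-dimensional, (ii) a weight module, and (iii) irreducible. Dimension is immediate from the basis $v_0,\dots,v_{\ell-1}$. For the weight module condition I need the center $\Fcent[\Omega]$ to act diagonalizably; since $\Omega$ acts by the scalar $\mu+\mu^{-1}$ on every basis vector and $\Fcent$ acts by the character $\chi$ (also scalar), the whole center acts by scalars, so diagonalizability is automatic. The genuine content is irreducibility.

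For irreducibility, first I would pin down the $F$-action, which the definition omits: using the relation $[E,F]=(q-q^{-1})(K-K^{-1})$ together with the formula $\Omega = EF + \zeta^{-1}K + \zeta K^{-1}$, I can solve for $F\cdot v_k$. Indeed $EF = \Omega - \zeta^{-1}K - \zeta K^{-1}$ acts diagonally on each $v_k$ by a scalar $c_k = (\mu+\mu^{-1}) - \zeta^{-1}\beta\zeta^{2k} - \zeta\beta^{-1}\zeta^{-2k}$, and since $E$ shifts indices cyclically upward (invertibly, as $\epsilon = \chi(E^\ell)$ enters), $F$ must shift them downward; this determines $F\cdot v_k$ up to the scalars $c_k$.

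The main step is then the standard cyclic-representation argument. Let $W\subseteq \irrep{\chi,\mu}$ be a nonzero submodule. Because $K$ acts on $v_k$ by the eigenvalue $\beta\zeta^{2k}$, and for odd $\ell\ge 3$ the values $\zeta^{2k}$ for $0\le k<\ell$ are \emph{distinct} (as $\zeta^2$ is again a primitive $\ell$th root of unity), the $v_k$ are eigenvectors for distinct eigenvalues of $K$. Hence any $K$-stable subspace is spanned by a subset of the $v_k$; in particular $W$ contains at least one basis vector $v_j$. Applying $E$ repeatedly cyclically permutes the basis (with the nonzero factor $\epsilon$ when wrapping around, noting $\epsilon\ne 0$ on the relevant locus), so $W$ contains every $v_k$ and therefore $W = \irrep{\chi,\mu}$.

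The point requiring the most care is the genericity hypothesis hidden in the $E$-action: the argument that $E$ generates the whole basis from any single $v_j$ needs $E$ to act invertibly, i.e.\ $\epsilon=\chi(E^\ell)\neq 0$. When $\epsilon = 0$ the top map $E\cdot v_{\ell-1}=0$ and one must instead run the cyclic argument using the downward $F$-shifts, checking that the relevant scalars $c_k$ do not all vanish simultaneously; this is controlled by the constraint $\tr\chi = \mu^\ell+\mu^{-\ell}$ relating $\chi$ and $\mu$, which prevents the degenerate case where the module would decompose. I expect this verification that one of $E$ or $F$ always acts cyclically without a gap to be the only nontrivial obstacle; everything else is a direct consequence of the distinctness of the $K$-eigenvalues for odd $\ell$ together with Schur-type reasoning.
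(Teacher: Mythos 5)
The paper states this proposition without proof, so there is no argument of the authors' to compare against; yours is the natural one, and the generic case is handled correctly and completely. Since $\ell$ is odd, $\zeta^2$ is again a primitive $\ell$th root of unity, so the $K$-eigenvalues $\beta\zeta^{2k}$ are pairwise distinct, any submodule is spanned by a subset of the $v_k$, and when $\epsilon = \chi(E^\ell) \neq 0$ the operator $E$ permutes the lines $\C v_k$ cyclically, so a single basis vector generates everything. The dimension and weight-module claims are immediate, as you say.

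The gap is exactly where you flag it, and it is worse than you expect: the deferred verification for $\epsilon = 0$ does not go through, because the proposition as literally stated fails there. Writing $c_k$ for the scalar by which $EF = \Omega - \zeta^{-1}K - \zeta K^{-1}$ acts on $v_k$, one has $c_k = -\nu_k^{-1}(\nu_k - \mu)(\nu_k - \mu^{-1})$ with $\nu_k = \beta\zeta^{2k-1}$, and the $\nu_k$ run over all $\ell$ distinct $\ell$th roots of $\chi(K^\ell)$. Hence \emph{two} of the $c_k$ can vanish simultaneously, namely when $\mu$ and $\mu^{-1}$ are both $\ell$th roots of $\chi(K^\ell)$; this forces $\chi(K^\ell) = \pm 1$ and $\mu^\ell = \chi(K^\ell)$ with $\mu^2 \neq 1$, and it is compatible with the constraint $\tr\chi = \mu^\ell + \mu^{-\ell}$, which therefore does not rescue the argument. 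Concretely, for $\ell = 3$, $\chi$ the identity character ($\kappa = 1$, $\epsilon = \phi = 0$) and $\mu = \zeta$: with $\beta = 1$ one finds $Ev_2 = 0$ and $Fv_1 = 0$ (since $EFv_1 = c_1 v_1 = 0$ while $E$ is injective on $\C v_0$), so $\operatorname{span}(v_1, v_2)$ is a proper submodule --- this is just the $3$-dimensional module over the small quantum group breaking into its $1$- and $2$-dimensional composition factors, and no relabeling of the basis by a different choice of $\beta$ repairs it. So the correct statement needs a genericity hypothesis excluding the central points $\psi(\chi) = \pm I$ with $\mu^{2\ell} = 1 \neq \mu^2$ (a caveat the paper itself elides, despite explicitly including $\tr\chi = \pm 2$); your proof should add that hypothesis, or record the exception, rather than asserting that the eigenvalue constraint prevents the degenerate case.
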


Given a $\Fcent$-character $\chi$, a choice of fractional eigenvalue $\mu$ extends $\chi$ to a character $\hat \chi : \Fcent[\Omega] \to \C$ by $\hat \chi(\Omega) = \mu + \mu^{-1}$.
\begin{prop}
  The quotient $\funcalg/\ker \hat \chi$ is a simple $\funcalg$-bimodule of dimension $\ell^2$ and $\End{\irrep{\chi, \mu}} \iso \funcalg/\ker \hat \chi$.
\end{prop}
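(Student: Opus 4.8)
The plan is to realize $\End{V}$ (with $V \defeq \irrep{\chi,\mu}$) as the image of the representation map $\rho \colon \funcalg \to \End{V}$ and to identify its kernel with the two-sided ideal $I \defeq \funcalg \cdot \ker\hat\chi$ generated by the central character. First I would record that $\rho$ is a homomorphism of $\funcalg$-bimodules, where $\funcalg$ carries its regular bimodule structure and $\End{V}$ is made into a bimodule via $b \cdot \phi \cdot b' = \rho(b)\,\phi\,\rho(b')$; this is immediate from $\rho(bab') = \rho(b)\rho(a)\rho(b')$, and the same identity shows $\rho$ is an algebra map. Because $\Fcent[\Omega]$ is central and acts on $V$ by the scalar character $\hat\chi$, every element of $\ker\hat\chi$ maps to $0$, so $I \subseteq \ker\rho$ and $\rho$ descends to a bimodule (and algebra) map $\bar\rho \colon \funcalg/I \to \End{V}$.

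Next I would prove $\bar\rho$ is surjective. Since $V$ is finite-dimensional and irreducible over the algebraically closed field $\C$, Schur's lemma gives $\operatorname{End}_{\funcalg}(V) = \C$, and the Jacobson density (Burnside) theorem then shows that $\rho$, hence $\bar\rho$, is onto all of $\End{V} \iso M_\ell(\C)$. In particular $\dim_\C \funcalg/I \ge \ell^2$.

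The crux is the matching upper bound $\dim_\C \funcalg/I \le \ell^2$, for which I would combine the PBW basis $\{E^aF^bK^c\}$ with Theorem~\ref{thm:casimir-relation}. That theorem exhibits $\funcalg$ as free over $\Fcent$ with basis $\{E^aF^bK^c : 0 \le a,b,c \le \ell-1\}$ (rank $\ell^3$) and the full center $\Fcent[\Omega]$ as free over $\Fcent$ of rank $\ell$, with $\Omega$ subject to the degree-$\ell$ relation $\cb_\ell(\Omega) = K^\ell + K^{-\ell} + E^\ell F^\ell$. In the quotient $\funcalg/I$ the generators $E^\ell, F^\ell, K^{\pm\ell}$ become scalars and $\Omega = EF + \zeta^{-1}K + \zeta K^{-1}$ becomes the scalar $\mu+\mu^{-1}$, so I can solve $EF = (\mu+\mu^{-1}) - \zeta^{-1}K - \zeta K^{-1}$ as a polynomial in $K$. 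In the generic case $\chi(E^\ell) \ne 0$ the element $E$ is invertible, whence $F = E^{-1}\bigl((\mu+\mu^{-1}) - \zeta^{-1}K - \zeta K^{-1}\bigr)$ already lies in the subalgebra generated by $E$ and $K^{\pm1}$; using $KE = \zeta^2 EK$ together with $E^\ell, K^\ell \in \C$ I then reduce every word to the spanning set $\{E^aK^c : 0 \le a,c \le \ell-1\}$, giving $\dim_\C \funcalg/I \le \ell^2$. Together with the lower bound this forces equality and makes $\bar\rho$ an isomorphism of algebras and of bimodules; in particular $\ker\rho = I$, so $\funcalg/\ker\hat\chi \iso \End{V}$.

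Finally, simplicity is formal: under $\bar\rho$ the $\funcalg$-bimodule structure on $\funcalg/\ker\hat\chi$ is transported to that of $\End{V} \iso M_\ell(\C)$ over itself (both actions factor through the surjection onto $M_\ell(\C)$), whose sub-bimodules are exactly the two-sided ideals of the matrix algebra, namely $0$ and $M_\ell(\C)$. Hence $\funcalg/\ker\hat\chi$ is a simple $\funcalg$-bimodule of dimension $\ell^2$. I expect the main obstacle to be the upper bound away from the generic locus: when both $\chi(E^\ell)$ and $\chi(F^\ell)$ vanish neither $E$ nor $F$ is invertible, and the clean reduction above breaks down. There I would instead deduce $\dim_\C \funcalg/I = \ell^2$ from the freeness of $\funcalg$ as a module over its center (so that the fiber over every closed point of $\spec{\Fcent[\Omega]}$ attains the generic rank $\ell^2$), which is precisely the structural input that Theorem~\ref{thm:casimir-relation} is there to supply.
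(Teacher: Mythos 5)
Your argument is essentially the paper's own proof, just written out in full: the paper likewise deduces surjectivity of $\funcalg \to \End{\irrep{\chi,\mu}}$ from simplicity of $\irrep{\chi,\mu}$ and the fact that the center is $\Fcent[\Omega]$, and obtains the dimension count $\ell^2$ from the spanning set $\{K^iE^j\}$ supplied by Theorem~\ref{thm:casimir-relation}, exactly as in your generic-case reduction of $F$ to $E^{-1}(\Omega - \zeta K - \zeta^{-1}K^{-1})$. Your closing caveat about the locus $\chi(E^\ell)=\chi(F^\ell)=0$ is a reasonable extra precaution that the paper elides (it addresses the basis issue only in a footnote elsewhere), but it does not change the substance of the argument.
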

\begin{proof}
  This follows from the fact that $\irrep{\chi, \mu}$ is simple and the center of $\funcalg$ is $\Fcent[\Omega]$.
  To see that $\End{\irrep{\chi, \mu}}$ has dimension $\ell^2$, observe that by Theorem \ref{thm:casimir-relation} it has a basis $K^i E^j$ for $0 \le i,j \le \ell-1$.
\end{proof}
We introduce the bimodules $\End{\irrep{\chi, \mu}}$ in order to unambiguously define the braiding, as discussed in the next section.

\section{Braiding on the quantized function algebra}
Unlike $\funcalg[q]$, the algebra $\funcalg$ is not quasitriangular.%
\footnote{
  Technically speaking $\funcalg[q]$ is not quasitiangular either: only the $h$-adic version $\funcalg[h]$ is.
  When $q$ is generic the action of the formal power series $R$-matrix converges on every weight module, but when $q = \zeta$ is a root of unity this is no longer the case.
}
Instead, there is an outer automorphism
\[
  \rmat : \funcalg \otimes \funcalg \to (\funcalg \otimes \funcalg)[W^{-1}],
\]
where $W = 1 + (-1)^{\ell}K^{-r} E^r \otimes F^r K^r$,
satisfying the Yang-Baxter equations
\begin{align*}
  (\Delta \otimes 1) \rmat(u \otimes v) &= \rmat_{13} \rmat_{23} (\Delta(u) \otimes v) \\
  (1 \otimes \Delta) \rmat(u \otimes v) &= \rmat_{13} \rmat_{12} (u \otimes \Delta(v))
\end{align*}
and
\begin{align*}
  (\epsilon \otimes 1) \rmat(u \otimes v) &= \epsilon(u) v \\
  (1 \otimes \epsilon) \rmat(u \otimes v) &= \epsilon(v) u .
\end{align*}
Here $\Delta$ is the coproduct, $\epsilon$ the counit, and $\rmat_{ij}$ means the action on the $i$th and $j$th tensor factors.

In a quasitriangular Hopf algebra, $\rmat$ comes from conjugation by an element called the \emph{$R$-matrix}.
This is not the case for $\funcalg$, but there is a version of $\funcalg$ defined over formal power series in $h$ (with $q = e^h $) which has an $R$-matrix.
The conjugation action of this element is still well-defined in the specialization $q = \zeta$, giving the outer automorphism $\rmat$.
For more details, see \cite{MR2184023}.

We can characterize the action of $\rmat$ on $\funcalg \otimes \funcalg$ by
\begin{align}
  \label{eq:rmat-rel-i}
  \rmat(E \otimes 1) &= E \otimes K \\
  \rmat(1 \otimes F) &= K^{-1} \otimes F \\
  \rmat(1 \otimes K) &= (1 \otimes K) X^{-1} \\
  \label{eq:rmat-rel-iv}
  \rmat(\Delta(u)) &= \Delta^{\mathrm{op}}(u), \quad u \in \funcalg.
\end{align}
where $X = 1 - \zeta K^{-1}E \otimes F K$.

\begin{prop}
$\rmat$ extends uniquely to an algebra automorphism $\funcalg \otimes \funcalg \to (\funcalg \otimes \funcalg)[W^{-1}]$ and 
\[
  \rmat(\Omega \otimes 1) = \Omega \otimes 1, \quad
  \rmat(1 \otimes \Omega ) = 1 \otimes \Omega.
\]
\end{prop}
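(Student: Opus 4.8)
The plan is to split the statement into its two halves: first that relations \eqref{eq:rmat-rel-i}--\eqref{eq:rmat-rel-iv} pin down a unique algebra automorphism into the stated localization, and second that this automorphism fixes $\Omega \otimes 1$ and $1 \otimes \Omega$.

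For \emph{uniqueness}, I would use that $\funcalg \otimes \funcalg$ is generated as an algebra by the six elements $E \otimes 1$, $F \otimes 1$, $K^{\pm 1}\otimes 1$, $1 \otimes E$, $1 \otimes F$, $1 \otimes K^{\pm 1}$. The first three characterizing relations give $\rmat(E \otimes 1)$, $\rmat(1 \otimes F)$, $\rmat(1 \otimes K)$ outright, and the relation $\rmat(\Delta(u)) = \Delta^{\mathrm{op}}(u)$ applied to $u = K, E, F$, combined with multiplicativity, forces the remaining three. I would start with $u = K$: since $\Delta(K) = \Delta^{\mathrm{op}}(K) = K \otimes K$, multiplicativity yields $\rmat(K \otimes 1) = (K \otimes K)\,\rmat(1 \otimes K)^{-1} = (K \otimes K)\,X\,(1 \otimes K^{-1})$, which collapses to $K \otimes 1 - \zeta\, E \otimes KF$. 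Feeding this back into the relations for $u = E$ and $u = F$ then determines $\rmat(1 \otimes E)$ and $\rmat(F \otimes 1)$ as the unique solutions of the resulting linear identities. Hence any algebra homomorphism obeying \eqref{eq:rmat-rel-i}--\eqref{eq:rmat-rel-iv} is unique.

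Before checking existence I would justify the codomain. Set $N = \zeta K^{-1}E \otimes FK$, so $X = 1 - N$. Using $KE = \zeta^2 EK$, $KF = \zeta^{-2}FK$, and $\zeta^\ell = 1$ (as $\ell$ is odd), one computes $(K^{-1}E)^\ell = K^{-\ell}E^\ell$ and $(FK)^\ell = F^\ell K^\ell$, whence $N^\ell = K^{-\ell}E^\ell \otimes F^\ell K^\ell \in \Fcent \otimes \Fcent$ is central. Since $(-1)^\ell = -1$, this is exactly $W = 1 - N^\ell$, and therefore
\[
  X^{-1} = (1 - N)^{-1} = \left(1 + N + \cdots + N^{\ell - 1}\right) W^{-1}
\]
lies in $(\funcalg \otimes \funcalg)[W^{-1}]$. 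This is what makes $\rmat(1 \otimes K)$ and $\rmat(K \otimes 1)^{-1}$ well defined and shows the localization at $W$ is precisely the right target. To then establish \emph{existence} as an algebra homomorphism I would either verify directly that the assigned generator images respect the defining relations of $\funcalg \otimes \funcalg$ (the $\mathfrak{sl}_2$ relations in each slot plus the cross commutativity $(a \otimes 1)(1 \otimes b) = (1 \otimes b)(a \otimes 1)$), or, more cleanly, invoke that $\rmat$ is by construction the $q = \zeta$ specialization of conjugation by the $h$-adic universal $R$-matrix of \cite{MR2184023}, which is manifestly an automorphism with inverse $\mathrm{Ad}_{R^{-1}}$. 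I expect verifying these cross relations — tracking the $q$-commutation through the factor $X^{-1}$ — to be the main obstacle in a fully self-contained argument.

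For the Casimir, the conceptual reason is that $\Omega \otimes 1$ is central in $\funcalg \otimes \funcalg$, and generically $\rmat = \mathrm{Ad}_R$, so $\rmat(\Omega \otimes 1) = R(\Omega \otimes 1)R^{-1} = \Omega \otimes 1$; as $\rmat(\Omega \otimes 1)$ is a rational expression in $q$ equal to $\Omega \otimes 1$ generically, the identity persists at $q = \zeta$. I would emphasize that this argument is special to $\Omega$: it does \emph{not} say $\rmat$ fixes every central element of $\funcalg$ (indeed $\rmat(K^\ell \otimes 1) = (K \otimes 1 - \zeta\, E \otimes KF)^\ell \ne K^\ell \otimes 1$), precisely because $K^\ell$ is central only at the root of unity while $\Omega$ is central for all $q$. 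As a self-contained confirmation I would expand $\rmat(\Omega \otimes 1) = (E \otimes K)\,\rmat(F \otimes 1) + \zeta^{-1}\rmat(K \otimes 1) + \zeta\,\rmat(K^{-1}\otimes 1)$ using the generator images found above and check that the $X$-factors telescope to leave $\Omega \otimes 1$; the computation for $1 \otimes \Omega$ is entirely symmetric, using $\rmat(1 \otimes F)$, $\rmat(1 \otimes E)$, and $\rmat(1 \otimes K)$ in place of their mirror images.
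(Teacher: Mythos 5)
The paper states this proposition bare, with no proof; the surrounding text simply defers to \cite{MR2184023} for the fact that conjugation by the $h$-adic $R$-matrix survives specialization to $q=\zeta$. Your sketch therefore supplies strictly more than the paper does, and the parts you actually carry out are correct: the $q$-commutation computation giving $(K^{-1}E)^\ell = K^{-\ell}E^\ell$ and $(FK)^\ell = F^\ell K^\ell$ (the accumulated factor $\zeta^{-\ell(\ell-1)}$ is $1$ since $\zeta^\ell=1$) correctly identifies $W = 1-N^\ell$ and hence $X^{-1} = (1+N+\cdots+N^{\ell-1})W^{-1}$, which matches the formula the paper invokes later when proving rationality of the braiding matrix; the uniqueness induction through $\rmat(\Delta(u))=\Delta^{\mathrm{op}}(u)$ for $u=K,E,F$ is the right mechanism, and your intermediate value $\rmat(K\otimes 1)=K\otimes 1-\zeta\,E\otimes KF$ checks out; and your remark that ``conjugation fixes central elements'' applies to $\Omega$ (central for all $q$) but fails for $K^\ell$ (central only at the root of unity) isolates exactly why the statement about $\Omega$ is true while the characters $\chi_i$ nevertheless change at a crossing. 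The two steps you leave as promissory notes --- checking that the generator images respect the defining relations of $\funcalg\otimes\funcalg$ (existence), and the direct telescoping verification of $\rmat(\Omega\otimes 1)=\Omega\otimes 1$ --- are precisely the computations the paper also omits, so falling back on the $h$-adic construction of \cite{MR2184023} for them, as you propose, reproduces the authors' own position; a fully self-contained proof would require carrying out at least one of them.
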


Now let $\chi_1, \chi_2$ be $\Fcent$-characters with fractional eigenvalues $\mu_1, \mu_2$.
If $(\chi_1 \otimes \chi_2)(W) \ne 0$, then the automorphism $\rmat$ descends to a map of bimodules
\[
  \rmat : \End{\irrep{\chi_1, \mu_1}} \otimes \End{\irrep{\chi_2, \mu_2}} \to 
  \End{\irrep{\chi_3, \mu_1}} \otimes \End{\irrep{\chi_4, \mu_2}}
\]
where $\chi_3, \chi_4$ are in general different from both $\chi_1$ and $\chi_2$, determined by the rule
\[
  (\chi_3 \otimes \chi_4)\rmat = \chi_1 \otimes \chi_2. 
\]
This corresponds to the fact that the holonomy of a path around a strand of a knot diagram changes at the crossings.
We can describe $\chi_3$ and $\chi_4$ in terms of $\chi_1$ and $\chi_2$ by computing the action of $\rmat$ on $\Fcent \otimes \Fcent$, but there is a more enlightenting geometric interpretation given in Section \ref{sec:colored-tangle-diagrams}.

As mentioned above, the braiding $\rmat$ is only defined when
\[
  (\chi_1 \otimes \chi_2)(W) = 1 - \chi_1 (K^{-\ell} E^\ell) \chi_2(F^\ell K^\ell) \ne 0.
\]
When this holds, we say that the pair $(\chi_1, \chi_2)$ is \emph{admissible}.
Because not all pairs of $\Fcent$-characters are admissible, the braiding is only partially defined.
Dealing with this rigorously requires introducing the concept of a \emph{generically defined biquandle} \cite[Section 5]{MR4073970}.
The key observation is that the set of admissible pairs is large (Zariski open and dense), so we can always globally conjugate away from them.
As such the partially-defined braiding is rarely an issue in practice.

One advantage of defining a braiding using the map $\rmat$ is that its matrix consists of rational functions in the parameters of the characters.
\begin{prop}
  With respect to the bases%
  \footnote{
    Technically this is a basis only when  $\chi_i(E^\ell) \ne 0$, so $E$ acts invertibly.
    Generically this is true; when it is not, we can instead use the basis $ K^i F^j$, or either in the case where $\chi_i(E^\ell) = \chi_i(F^\ell) = 0$.
  }
  $\{K^i E^j : 0 \le i, j \le \ell-1\}$ of $\End{\irrep{\chi_i, \mu_i}}$ the matrix components of 
  \[
  \rmat : \End{\irrep{\chi_1, \mu_1}} \otimes \End{\irrep{\chi_2, \mu_2}} \to 
  \End{\irrep{\chi_3, \mu_1}} \otimes \End{\irrep{\chi_4, \mu_2}}
  \]
  are rational functions in the parameters
  \[
    \chi_i(K^\ell), \chi_i(E^\ell), \chi_i(F^\ell), \chi_i(\Omega) = (\mu + \mu^{-1}).
  \]
\end{prop}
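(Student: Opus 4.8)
The plan is to exploit that $\rmat$ is an \emph{algebra} automorphism: its matrix entries are completely determined by its action on a generating set of $\funcalg \otimes \funcalg$ together with the reduction of the resulting expressions to the chosen basis modulo the central relations that cut out each bimodule. So the argument splits into (i) computing $\rmat$ on generators, (ii) checking that the localizations $X^{-1}$ and $W^{-1}$ become rational in the parameters on the bimodules, and (iii) assembling everything by multiplicativity.

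First I would record the bimodule structure. Since $\End{\irrep{\chi_i,\mu_i}} \cong \funcalg/\ker\hat\chi_i$, on this bimodule the central generators $E^\ell, F^\ell, K^{\pm\ell}, \Omega$ act by the scalars $\chi_i(E^\ell), \chi_i(F^\ell), \chi_i(K^{\pm\ell}), \mu_i + \mu_i^{-1}$. In particular, when $\chi_i(E^\ell)\neq 0$ the generator $E$ is invertible with $E^{-1} = E^{\ell-1}/\chi_i(E^\ell)$, likewise $K^{-1} = K^{\ell-1}/\chi_i(K^\ell)$, and from $\Omega = EF + \zeta^{-1}K + \zeta K^{-1}$ we may solve $F = E^{-1}(\Omega - \zeta^{-1}K - \zeta K^{-1})$. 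Thus every element of the bimodule rewrites in the basis $\{K^iE^j\}$ with coefficients rational in the parameters. Next I would make $\rmat$ explicit on generators: relations \eqref{eq:rmat-rel-i}--\eqref{eq:rmat-rel-iv} give $\rmat(E\otimes 1)$, $\rmat(1\otimes F)$, and $\rmat(1\otimes K)$ directly, and applying $\rmat(\Delta(u)) = \Delta^{\mathrm{op}}(u)$ to $u = K, E, F$ together with multiplicativity lets me solve in turn for $\rmat(K\otimes 1)$, $\rmat(1\otimes E)$, and $\rmat(F\otimes 1)$. Each image lies in $(\funcalg\otimes\funcalg)[W^{-1}]$ and is a polynomial in the generators and in $X^{-1}$, where $X = 1 - \zeta K^{-1}E\otimes FK$.

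The crucial point for rationality is that both $W^{-1}$ and $X^{-1}$ descend to rational expressions in the parameters. Writing $Y = \zeta K^{-1}E\otimes FK$, one computes for odd $\ell$ that $Y^\ell = K^{-\ell}E^\ell \otimes F^\ell K^\ell$ is central, so it acts by the scalar $c = \chi_1(K^{-\ell}E^\ell)\,\chi_2(F^\ell K^\ell)$. The telescoping identity $(1-Y)\sum_{m=0}^{\ell-1} Y^m = 1 - Y^\ell$ then shows that $X = 1-Y$ acts invertibly with $X^{-1} = (1-c)^{-1}\sum_{m=0}^{\ell-1} Y^m$, whose coefficients are rational in the parameters; here the denominator $1 - c = (\chi_1\otimes\chi_2)(W)$ is exactly the admissibility factor, nonzero by hypothesis. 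The same observation shows $W$ acts by the scalar $(\chi_1\otimes\chi_2)(W)$, so $W^{-1}$ is just its reciprocal.

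Finally, by multiplicativity $\rmat(K^iE^j \otimes K^aE^b)$ equals the corresponding product of the generator-images above; expanding it, applying the commutation relations and the substitutions for $F$, $E^{-1}$, $K^{-1}$, $X^{-1}$, $W^{-1}$, and reducing modulo the central relations of the target bimodules $\End{\irrep{\chi_3,\mu_1}}\otimes\End{\irrep{\chi_4,\mu_2}}$ expresses the result in the basis $\{K^iE^j\otimes K^aE^b\}$ with entries rational in the parameters, proving the claim. I expect the main obstacle to be the bookkeeping in this last step: controlling the reduction to the PBW basis after inverting $E$, $K$, $X$, and $W$, and verifying that no denominators beyond polynomials in the listed parameters appear. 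The conceptual heart, however, is the factorization $(1-Y)\sum_{m=0}^{\ell-1}Y^m = 1 - Y^\ell$, which converts the formal localization $X^{-1}$ into a genuine rational function once the central parameters are fixed; once this is in hand, rationality of every entry follows formally from multiplicativity.
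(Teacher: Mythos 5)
Your proposal is correct and follows essentially the same route as the paper: eliminate $F$ via the Casimir relation using $E^{-1}=\chi(E^\ell)^{-1}E^{\ell-1}$, and handle $X^{-1}$ by observing that $(\zeta K^{-1}E\otimes FK)^\ell$ is central so the geometric series truncates to $\sum_{m=0}^{\ell-1}Y^m$ divided by the admissibility scalar. Your write-up is somewhat more explicit about the telescoping identity and about which characters the scalar is evaluated against, but the argument is the same.
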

\begin{proof}
  This follows from the defining equations (\ref{eq:rmat-rel-i}--\ref{eq:rmat-rel-iv}) and using the relation
  \[
    F = (\Omega - \zeta K - \zeta^{-1} K^{-1}) E^{-1}
  \]
  with $E^{-1} = \chi(E^r)^{-1} E^{r-1}$ to eliminate powers of $F$.
  To see that terms with $X^{-1}$ do not cause any problems, observe that
  when $X = 1 - \zeta K^{-1} E \otimes FK \in \End{\irrep{\chi_3, \mu_3}} \otimes \End{\irrep{\chi_4, \mu_4}}$ we have
  \[
    X^{-1} =
    \frac{\chi_3(K^\ell)}{ \chi_3(K^\ell) - \chi_3(E^\ell) \chi_4(K^\ell) \chi_4(F^\ell)}
    \sum_{m = 0}^{\ell - 1} (\zeta K^{-1} E \otimes KF)^j.\qedhere
  \]
\end{proof}

Notice that, since $\rmat$ is an \emph{outer} automorphism, it does not automatically descend to an $R$-matrix on modules, by which we mean a map
\[
  R_{V, W} : V \otimes W \to V' \otimes W'.
\]
In the usual Reshetkhin--Turaev construction, the action of the universal $R$-matrix $R \in \funcalg[q] \otimes \funcalg[q]$ on $V \otimes W$ converges defines such an $R$-matrix $R_{V,W}$.
However, when $E \otimes F$ does not act nilpotently on $V \otimes W$, the action of $R$ no longer converges.
When our representation $\rho \in \charvarg K$ is irreducible $E \otimes F$ is generically invertible so we cannot directly%
\footnote{
  There are workarounds, such as in \cite[Theorem 6.2]{MR4073970} but these introduce phase ambiguities.
  It seems that eliminating them requires extra structure on the generalized character variety $\charvarg K$.
}
define $R_{V,W}$ using the universal $R$-matrix.
Avoiding this problem is our motivation for replacing the simple module $V$ with the simple bimodule $\End{V}$.

\section{Colored tangle diagrams}
\label{sec:colored-tangle-diagrams}

In this section we describe how to describe a knot $K$ with a point 
\[
  \rho \in \repvarg K = \Hom{}{\knotgrpg(K)}{\SL_2(\C)}
\]
of its generalized representation variety in a form convenient for $\funcalg$.
Our treatment is somewhat informal; for all the details and a more general theory, see \cite{MR4073970}.

Let $K$ be a knot in $S^3$.
Recall that the \emph{knot group} $\pi(K)$ of $K$ is the fundamental group of the complement of $K$ in $S^3$.
The knot group has distinguished elements called \emph{meridians}; roughly speaking, a meridian is a path that wraps perpendicularly around a strand of $K$.
There are many meridians, but they are all conjugate.
(For a link, instead all meridians of each component are conjugate.)

It is possible to define a \emph{generalized knot group} $\pi^{(\ell)}(K)$ which contains an $\ell$th roots of the meridians of $K$ \cite{MR1167178, KellyThesis}.
Instead of defining $\pi^{(\ell)}(K)$ in full detail, we simply describe what homomorphisms from it to $\SL_2(\C)$ are.
Specifically, we show how to extend $\rho \in \Hom{}{\pi(K)}{\SL_2(\C)}$ to an element of $\Hom{}{\knotgrpg{K}}{\SL_2(\C)}$.

The idea is as follows: pick a meridian $\mathfrak m \in \pi(K)$.
We can diagonalize\footnote{If $\rho(\mathfrak m)$ has $\pm 1$ as an eigenvalue, then we instead put $\mathfrak m$ in Jordan form. Since we don't actually need the entries of $\rho(\mathfrak{m}^{1/\ell})$ this does not cause any problems.} $\rho(\mathfrak{m})$ as
\[
  \rho(\mathfrak{m}) =
  \begin{pmatrix}
    M  \\
    & M^{-1}
  \end{pmatrix},
\]
and then choosing an $\ell$th root means choosing $\mu$ with $\mu^\ell = M$ :
\[
  \rho(\mathfrak{m}^{1/\ell}) =
  \begin{pmatrix}
    \mu \\
    & \mu^{-1}
  \end{pmatrix}.
\]
Since the other meridians are conjugate to $\mathfrak{m}$, we can compute them in terms of the above choice.
\begin{defn}
  \label{def:knotgrpg}
  Let $K$ be a knot in $S^3$.
  An element of $\Hom{}{\knotgrpg K}{\SL_2(\C)}$ is a group homomorphism $\rho : \pi(K) = \pi_1(S^3 \setminus K) \to \SL_2(\C)$ along with a complex number $\mu$ such that
  \[
    \mu^{\ell} + \mu^{-\ell} = \tr \rho(\mathfrak m)
  \]
  where $\mathfrak m$ is a meridian of $K$.
  (We could equivalently say that $\mu^\ell$ is an eigenvalue of $\rho(\mathfrak m)$.)
  We call the data $(\rho, \mu)$ a \emph{generalized $\SL_2(\C)$-representation} of $\pi(K)$ or simply a \emph{generalized representation.}
\end{defn}
The group homomorphism $\rho$ corresponds (generically) to a choice of $\Fcent$-character on each strand of a diagram of $K$, as we will describe shortly.
The choice of $\mu$ determines the action of the Casimir $\Omega$ on the bimodule assigned to each strand.

We can now explain how to associate an element of $\Hom{}{\knotgrpg K}{\SL_2(\C)}$ to a tangle diagram labelled by pairs $(\chi, \mu)$, equivalently by simple $\funcalg$-bimodules.
In this context it is most natural to use a slightly nonstandard description of the fundamental group of a knot complement.
\begin{figure}
  \centering
  \subcaptionbox{A positive crossing, where the characters satisfy $\chi_1 \otimes \chi_2 = (\chi_3 \otimes \chi_4) \rmat$.\label{fig:crossing-rule-positive}}{ 
\begingroup%
  \makeatletter%
  \providecommand\color[2][]{%
    \errmessage{(Inkscape) Color is used for the text in Inkscape, but the package 'color.sty' is not loaded}%
    \renewcommand\color[2][]{}%
  }%
  \providecommand\transparent[1]{%
    \errmessage{(Inkscape) Transparency is used (non-zero) for the text in Inkscape, but the package 'transparent.sty' is not loaded}%
    \renewcommand\transparent[1]{}%
  }%
  \providecommand\rotatebox[2]{#2}%
  \newcommand*\fsize{\dimexpr\f@size pt\relax}%
  \newcommand*\lineheight[1]{\fontsize{\fsize}{#1\fsize}\selectfont}%
  \ifx\svgwidth\undefined%
    \setlength{\unitlength}{105.78839207bp}%
    \ifx\svgscale\undefined%
      \relax%
    \else%
      \setlength{\unitlength}{\unitlength * \real{\svgscale}}%
    \fi%
  \else%
    \setlength{\unitlength}{\svgwidth}%
  \fi%
  \global\let\svgwidth\undefined%
  \global\let\svgscale\undefined%
  \makeatother%
  \begin{picture}(1,0.82822064)%
    \lineheight{1}%
    \setlength\tabcolsep{0pt}%
    \put(0,0){\includegraphics[width=\unitlength,page=1]{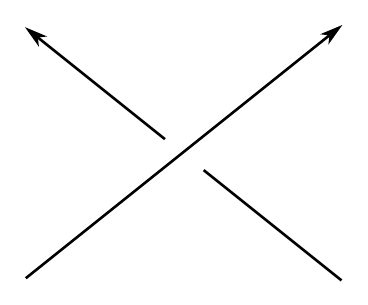}}%
    \put(0.03366009,0.00347595){\color[rgb]{0,0,0}\makebox(0,0)[lt]{\lineheight{1.25}\smash{\begin{tabular}[t]{l}$\chi_1$\end{tabular}}}}%
    \put(0.88753004,0.00471369){\color[rgb]{0,0,0}\makebox(0,0)[lt]{\lineheight{1.25}\smash{\begin{tabular}[t]{l}$\chi_2$\end{tabular}}}}%
    \put(0.03397065,0.79645974){\color[rgb]{0,0,0}\makebox(0,0)[lt]{\lineheight{1.25}\smash{\begin{tabular}[t]{l}$\chi_4$\end{tabular}}}}%
    \put(0.88784057,0.79769746){\color[rgb]{0,0,0}\makebox(0,0)[lt]{\lineheight{1.25}\smash{\begin{tabular}[t]{l}$\chi_3$\end{tabular}}}}%
  \end{picture}%
\endgroup%
 }%
  \hfill
  \subcaptionbox{A negative crossing, where the characters satisfy $\chi_1 \otimes \chi_2 = (\chi_3 \otimes \chi_4) \rmat^{-1}$.\label{fig:crossing-rule-negative}}{ 
\begingroup%
  \makeatletter%
  \providecommand\color[2][]{%
    \errmessage{(Inkscape) Color is used for the text in Inkscape, but the package 'color.sty' is not loaded}%
    \renewcommand\color[2][]{}%
  }%
  \providecommand\transparent[1]{%
    \errmessage{(Inkscape) Transparency is used (non-zero) for the text in Inkscape, but the package 'transparent.sty' is not loaded}%
    \renewcommand\transparent[1]{}%
  }%
  \providecommand\rotatebox[2]{#2}%
  \newcommand*\fsize{\dimexpr\f@size pt\relax}%
  \newcommand*\lineheight[1]{\fontsize{\fsize}{#1\fsize}\selectfont}%
  \ifx\svgwidth\undefined%
    \setlength{\unitlength}{105.78839207bp}%
    \ifx\svgscale\undefined%
      \relax%
    \else%
      \setlength{\unitlength}{\unitlength * \real{\svgscale}}%
    \fi%
  \else%
    \setlength{\unitlength}{\svgwidth}%
  \fi%
  \global\let\svgwidth\undefined%
  \global\let\svgscale\undefined%
  \makeatother%
  \begin{picture}(1,0.82822064)%
    \lineheight{1}%
    \setlength\tabcolsep{0pt}%
    \put(0,0){\includegraphics[width=\unitlength,page=1]{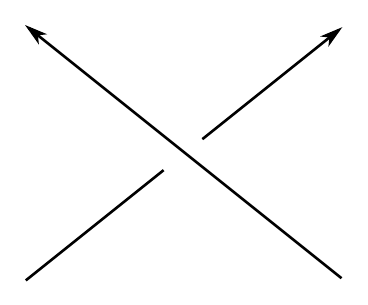}}%
    \put(0.03366009,0.00347595){\color[rgb]{0,0,0}\makebox(0,0)[lt]{\lineheight{1.25}\smash{\begin{tabular}[t]{l}$\chi_1$\end{tabular}}}}%
    \put(0.88753004,0.00471369){\color[rgb]{0,0,0}\makebox(0,0)[lt]{\lineheight{1.25}\smash{\begin{tabular}[t]{l}$\chi_2$\end{tabular}}}}%
    \put(0.03397065,0.79645974){\color[rgb]{0,0,0}\makebox(0,0)[lt]{\lineheight{1.25}\smash{\begin{tabular}[t]{l}$\chi_4$\end{tabular}}}}%
    \put(0.88784057,0.79769746){\color[rgb]{0,0,0}\makebox(0,0)[lt]{\lineheight{1.25}\smash{\begin{tabular}[t]{l}$\chi_3$\end{tabular}}}}%
  \end{picture}%
\endgroup%
 }%
  \caption{Rules for characters at crossings. Note that part of the rule is requiring that the partially-defined map $\rmat$ makes sense (i.e.\@ that $(\chi_1, \chi_2)$ is admissible).}
  \label{fig:crossing-rules} 
\end{figure}
\begin{defn}
  Let $D$ be an oriented tangle diagram.
  We can think of $D$ as a decorated $4$-valent planar graph, and we call the edges of this graph \emph{segments}.
  We say that $D$ is \emph{colored} if each segment is decorated with 
  \begin{enumerate}
    \item a central character $\chi : \Fcent \to \C$ and
    \item a fractional eigenvalue $\mu$ for $\chi$.
  \end{enumerate}
  (Equivalently, this data is a character $\hat \chi : \Fcent[\Omega]$ via  $\hat\chi(\Omega) = \mu + \mu^{-1}$.)
  The characters must satisfy compatibility conditions at each crossing of the diagram, which are given for characters in Figure \ref{fig:crossing-rules}.
  For the eigenvalues we simply require that strands $1$ and $3$ have the same value of $\mu$, and the same for strands $2$ and $4$.
\end{defn}
The condition on the fractional eigenvalues is the same as requiring that each segment of each connected component of the tangle has the same fractional eigenvalue.
Because we focus on knots, our tangles only have one connected component, so we just need to make a single global choice of $\mu$.

An example of a colored diagram is given in Figure \ref{fig:figure-eight}, which shows a $(1,1)$-tangle  whose closure is the figure-eight knot $\figeight{}$.
To avoid cluttering the picture we have not labeled every segment of the diagram; the unlabeled segments are determined completely by the rules of Figure \ref{fig:crossing-rules} once we choose the values of $x_0$ and $y_0$.

\begin{figure}
\begingroup%
  \makeatletter%
  \providecommand\color[2][]{%
    \errmessage{(Inkscape) Color is used for the text in Inkscape, but the package 'color.sty' is not loaded}%
    \renewcommand\color[2][]{}%
  }%
  \providecommand\transparent[1]{%
    \errmessage{(Inkscape) Transparency is used (non-zero) for the text in Inkscape, but the package 'transparent.sty' is not loaded}%
    \renewcommand\transparent[1]{}%
  }%
  \providecommand\rotatebox[2]{#2}%
  \newcommand*\fsize{\dimexpr\f@size pt\relax}%
  \newcommand*\lineheight[1]{\fontsize{\fsize}{#1\fsize}\selectfont}%
  \ifx\svgwidth\undefined%
    \setlength{\unitlength}{288bp}%
    \ifx\svgscale\undefined%
      \relax%
    \else%
      \setlength{\unitlength}{\unitlength * \real{\svgscale}}%
    \fi%
  \else%
    \setlength{\unitlength}{\svgwidth}%
  \fi%
  \global\let\svgwidth\undefined%
  \global\let\svgscale\undefined%
  \makeatother%
  \begin{picture}(1,1)%
    \lineheight{1}%
    \setlength\tabcolsep{0pt}%
    \put(0,0){\includegraphics[width=\unitlength,page=1]{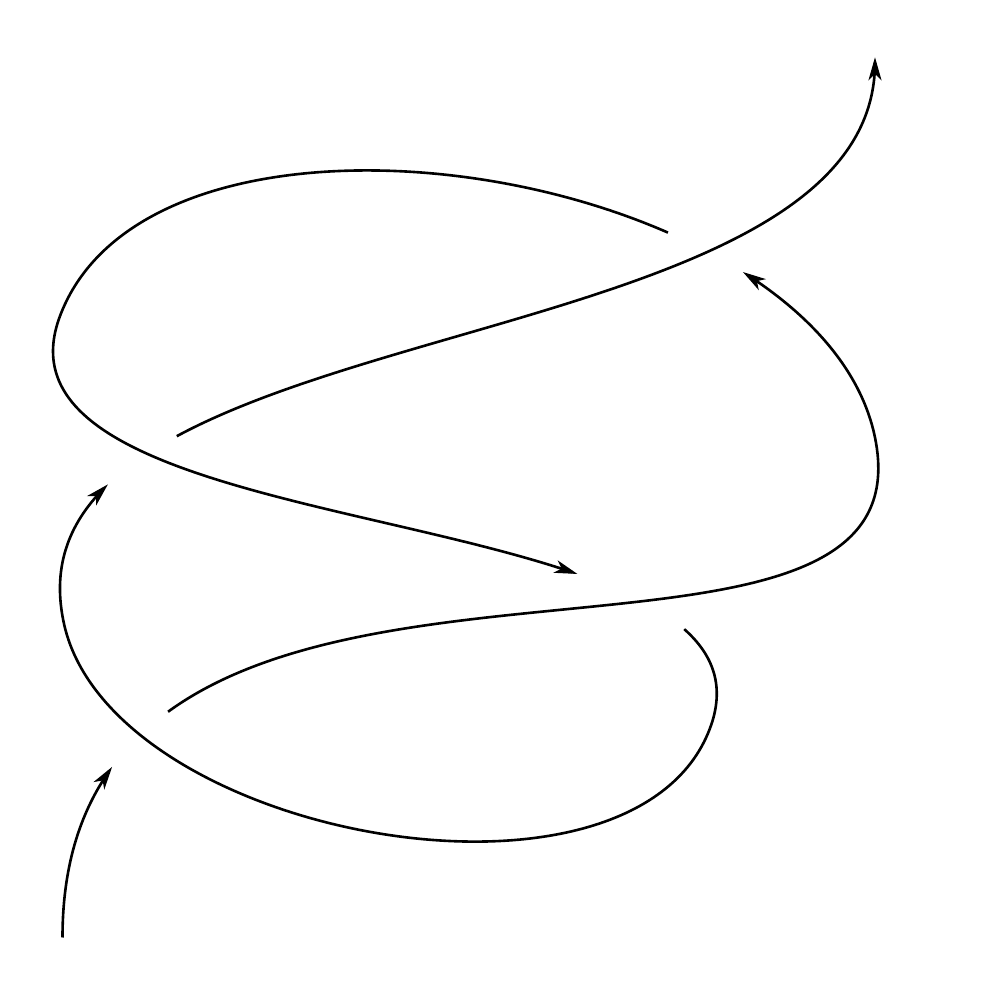}}%
    \put(0.07812503,0.0625){\color[rgb]{0,0,0}\makebox(0,0)[lt]{\lineheight{1.25}\smash{\begin{tabular}[t]{l}$x_0$\end{tabular}}}}%
    \put(0.61979159,0.12499998){\color[rgb]{0,0,0}\makebox(0,0)[lt]{\lineheight{1.25}\smash{\begin{tabular}[t]{l}$y_0$\end{tabular}}}}%
    \put(0.01298716,0.375){\color[rgb]{0,0,0}\makebox(0,0)[lt]{\lineheight{1.25}\smash{\begin{tabular}[t]{l}$x_1$\end{tabular}}}}%
    \put(0.00641977,0.62709096){\color[rgb]{0,0,0}\makebox(0,0)[lt]{\lineheight{1.25}\smash{\begin{tabular}[t]{l}$x_2$\end{tabular}}}}%
    \put(0.87500002,0.85){\color[rgb]{0,0,0}\makebox(0,0)[lt]{\lineheight{1.25}\smash{\begin{tabular}[t]{l}$x_3$\end{tabular}}}}%
  \end{picture}%
\endgroup%
 
  \caption{A $(1,1)$-tangle whose closure is the figure-eight knot. Some segments are colored by characters.
  Notice that the closure is well-defined exactly when $x_3 = x_0$ and that in general $x_1 \ne y_0$.}
  \label{fig:figure-eight}
\end{figure}

A colored tangle diagram $D$ describes a tangle plus a representation $\knotgrpg(T) \to \SL_2(\C)$ of the tangle complement.
Here by $\knotgrpg(T)$ we mean the obvious generalization of $\knotgrpg(L)$ to tangle complements: viewing a tangle as embedded in $[0,1]^3$, $\knotgrp(T)$ is the fundamental group of the complement of $T$.
Meridians still make sense in this context, and a representation $\knotgrpg(T)$ is a representation of $\knotgrp(T)$ plus fractional eigenvalues of the meridian, as before.
To describe this representation in terms of the coloring of the diagram, recall that we associated characters $\chi$ to pairs of matrices via the rule \eqref{eq:char-to-mat}.
\begin{defn}
  For any $\Fcent$-character $\chi$, the \emph{upper} and \emph{lower} holonomy are the matrices
  \[
    \chi^+ =
    \begin{pmatrix}
      \kappa & 0 \\
      \phi & 1
    \end{pmatrix}
    \text{ and }
    \chi^-
    \begin{pmatrix}
      1 & \epsilon \\
      0 & \kappa
    \end{pmatrix},
  \]
  where
  \[
    \kappa = \chi(K^\ell), \quad
    \epsilon = \chi(E^\ell), \quad \text{and } 
    \phi = - \chi(K^\ell F^\ell).
  \]  
\end{defn}

Given a path $\gamma$ in the complement of a tangle diagram we can always homotope $\gamma$ so that it only crosses segments of the diagram transversely.
At each segment $\gamma$ either goes above or below the segment.

\begin{defn}
  \label{def:holonomy-of-diagram}
  Suppose that the path $\gamma$ in the complement of a colored tangle diagram crosses segments labeled by characters $\chi_1, \dots, \chi_n$.
  The \emph{holonomy} of $\gamma$ is the matrix
  \[
    \rho(\gamma) = (\chi_1^{\eta})^{\varepsilon_1} \cdots (\chi^{\eta})^{\varepsilon_n}
  \]
  where
  \begin{itemize}
    \item $\eta_i$ is $+$ if $\gamma$ passes above the strand labeled by $\chi_i$ and $-$ if it passes  below it, and
    \item $\varepsilon_i = +1$ if $\gamma$ passes an upward-pointing strand left-to-right and $-1$ otherwise.
  \end{itemize}
\end{defn}
\begin{figure}
\begingroup%
  \makeatletter%
  \providecommand\color[2][]{%
    \errmessage{(Inkscape) Color is used for the text in Inkscape, but the package 'color.sty' is not loaded}%
    \renewcommand\color[2][]{}%
  }%
  \providecommand\transparent[1]{%
    \errmessage{(Inkscape) Transparency is used (non-zero) for the text in Inkscape, but the package 'transparent.sty' is not loaded}%
    \renewcommand\transparent[1]{}%
  }%
  \providecommand\rotatebox[2]{#2}%
  \newcommand*\fsize{\dimexpr\f@size pt\relax}%
  \newcommand*\lineheight[1]{\fontsize{\fsize}{#1\fsize}\selectfont}%
  \ifx\svgwidth\undefined%
    \setlength{\unitlength}{288bp}%
    \ifx\svgscale\undefined%
      \relax%
    \else%
      \setlength{\unitlength}{\unitlength * \real{\svgscale}}%
    \fi%
  \else%
    \setlength{\unitlength}{\svgwidth}%
  \fi%
  \global\let\svgwidth\undefined%
  \global\let\svgscale\undefined%
  \makeatother%
  \begin{picture}(1,1)%
    \lineheight{1}%
    \setlength\tabcolsep{0pt}%
    \put(0,0){\includegraphics[width=\unitlength,page=1]{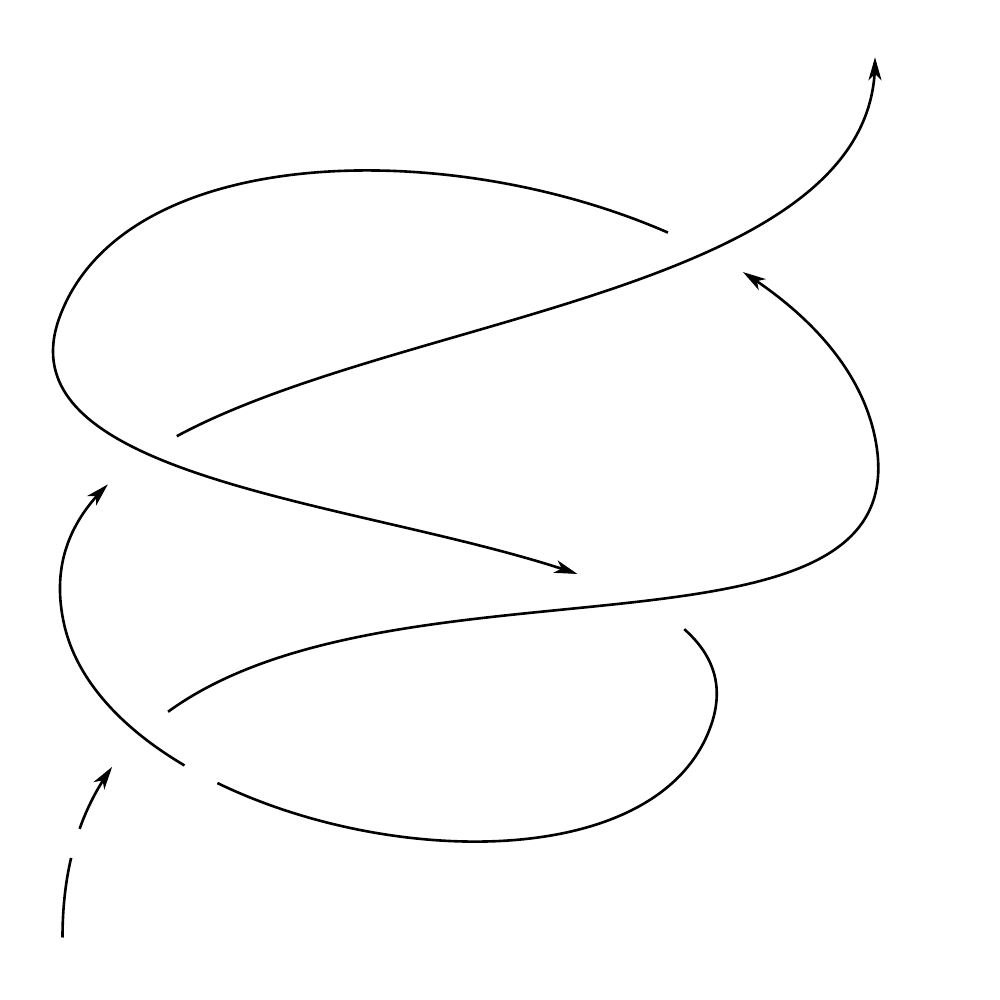}}%
    \put(0.07812503,0.0625){\color[rgb]{0,0,0}\makebox(0,0)[lt]{\lineheight{1.25}\smash{\begin{tabular}[t]{l}$x_0$\end{tabular}}}}%
    \put(0.61979159,0.12499998){\color[rgb]{0,0,0}\makebox(0,0)[lt]{\lineheight{1.25}\smash{\begin{tabular}[t]{l}$y_0$\end{tabular}}}}%
    \put(0.01298716,0.375){\color[rgb]{0,0,0}\makebox(0,0)[lt]{\lineheight{1.25}\smash{\begin{tabular}[t]{l}$x_1$\end{tabular}}}}%
    \put(0.00641977,0.62709096){\color[rgb]{0,0,0}\makebox(0,0)[lt]{\lineheight{1.25}\smash{\begin{tabular}[t]{l}$x_2$\end{tabular}}}}%
    \put(0.87500002,0.85){\color[rgb]{0,0,0}\makebox(0,0)[lt]{\lineheight{1.25}\smash{\begin{tabular}[t]{l}$x_3$\end{tabular}}}}%
    \put(0,0){\includegraphics[width=\unitlength,page=2]{figure-eight-path-examples.pdf}}%
  \end{picture}%
\endgroup%
 
\caption{A path in the complement with holonomy $x_0^+ y_0^+ (y_0^-)^{-1}(x_0^+)^{-1}$.}
  \label{fig:figure-eight-path-examples}
\end{figure}
For example, the path in Figure \ref{fig:figure-eight-path-examples} crosses above $x_0$ positively, above $y_0$ positively, below $y_0$ negatively, then above $x_0$ negatively, so it has holonomy
\[
  x_0^+ y_0^+ (y_0^-)^{-1}(x_0^+)^{-1}.
\]
If we close up the free ends of the diagram to obtain a diagram of the $\figeight$ knot, this path is a Wirtinger  generator (hence a meridian) of the knot group of $\figeight$.
In general, a Wirtinger generator $\mathfrak m$ corresponding to a segment colored by a character $\chi$ will have holonomy of the form
\[
  (y_1^+ \cdots y_k^+)\chi^+ (\chi^-)^{-1}(y_1^+ \cdots y_k^+)^{-1}
\]
where the characters $y_i$ represent a path from the basepoint to a region adjacent to the chosen segment.
In particular, the holonomy of $\mathfrak m$ is conjugate to the defactorization 
\[
  \psi(\chi)
  =
  \chi^+ (\chi^-)^{-1} 
  =
  \begin{pmatrix}
    \chi(K^\ell) & - \chi(E^\ell) \\
    - \chi(K^\ell F^\ell) & \chi(K^{-\ell} - (-1)^\ell E^\ell F^\ell)
  \end{pmatrix}
\]
of $\chi$.

Conversely, suppose the basepoint of the knot group is located near a segment whose Writinger generator has holonomy $g$.
To determine the character $\chi$ coloring the segment we need to factorize $g$ into upper and lower-triangular parts $g = \chi^+ (\chi^-)^{-1}$.
If the basepoint is located somewhere else, then we need to conjugate by some word in the $\chi_i^+$.

Because of this issue with the basepoints the Wirtinger generators of the knot group are somewhat difficult to describe directly.
As such, it is better to think in terms of representations of the fundamental \emph{groupoid} of the diagram complement.
This groupoid has objects regions of the diagram.
For each pair of adjacent regions there are two morphisms representing paths above and below the segment separating them, and these generate the groupoid.
(A closely related description is given in \cite[Lemma 3.4]{MR3143580}.)
One can derive the relations
\begin{align*}
  \chi_1^+ \chi_2^+ &= \chi_4^+ \chi_3^+
  \\
  \chi_1^- \chi_2^+ &= \chi_4^+ \chi_3^-
  \\
  \chi_1^- \chi_2^- &= \chi_4^- \chi_3^-
\end{align*}
between the paths (hence the holonomies) at a positive crossing by checking paths above, between, and below the strands.
For example, two paths giving the middle relation are shown in Figure \ref{fig:groupoid-relations}.
It is not hard to determine similar relations for negative crossings.

\begin{figure}
\begingroup%
  \makeatletter%
  \providecommand\color[2][]{%
    \errmessage{(Inkscape) Color is used for the text in Inkscape, but the package 'color.sty' is not loaded}%
    \renewcommand\color[2][]{}%
  }%
  \providecommand\transparent[1]{%
    \errmessage{(Inkscape) Transparency is used (non-zero) for the text in Inkscape, but the package 'transparent.sty' is not loaded}%
    \renewcommand\transparent[1]{}%
  }%
  \providecommand\rotatebox[2]{#2}%
  \newcommand*\fsize{\dimexpr\f@size pt\relax}%
  \newcommand*\lineheight[1]{\fontsize{\fsize}{#1\fsize}\selectfont}%
  \ifx\svgwidth\undefined%
    \setlength{\unitlength}{105.78839207bp}%
    \ifx\svgscale\undefined%
      \relax%
    \else%
      \setlength{\unitlength}{\unitlength * \real{\svgscale}}%
    \fi%
  \else%
    \setlength{\unitlength}{\svgwidth}%
  \fi%
  \global\let\svgwidth\undefined%
  \global\let\svgscale\undefined%
  \makeatother%
  \begin{picture}(1,0.82822064)%
    \lineheight{1}%
    \setlength\tabcolsep{0pt}%
    \put(0,0){\includegraphics[width=\unitlength,page=1]{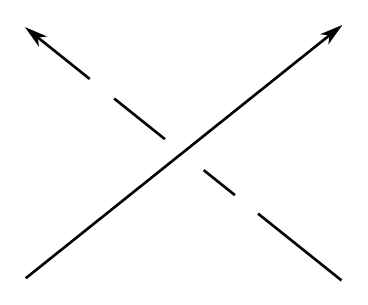}}%
    \put(0.03366009,0.00347595){\color[rgb]{0,0,0}\makebox(0,0)[lt]{\lineheight{1.25}\smash{\begin{tabular}[t]{l}$\chi_1$\end{tabular}}}}%
    \put(0.88753004,0.00471369){\color[rgb]{0,0,0}\makebox(0,0)[lt]{\lineheight{1.25}\smash{\begin{tabular}[t]{l}$\chi_2$\end{tabular}}}}%
    \put(0.03397065,0.79645974){\color[rgb]{0,0,0}\makebox(0,0)[lt]{\lineheight{1.25}\smash{\begin{tabular}[t]{l}$\chi_4$\end{tabular}}}}%
    \put(0.88784057,0.79769746){\color[rgb]{0,0,0}\makebox(0,0)[lt]{\lineheight{1.25}\smash{\begin{tabular}[t]{l}$\chi_3$\end{tabular}}}}%
    \put(0,0){\includegraphics[width=\unitlength,page=2]{groupoid-relations.pdf}}%
  \end{picture}%
\endgroup%

  \caption{Deriving the relation $\chi_1^- \chi_2^+ = \chi_4^+ \chi_3^-$.}
  \label{fig:groupoid-relations}
\end{figure}

\begin{lem}
  \begin{enumerate}
    \item The holonomy representation of a colored diagram is well-defined.
    \item Colorings of diagrams are compatible with Reidemeister moves.
  \end{enumerate}
\end{lem}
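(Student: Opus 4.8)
My plan is to treat the two claims separately, in each case reducing the topology to algebraic identities already recorded for $\rmat$ and for the matrices $\chi^\pm$; the detailed biquandle framework developed in \cite{MR4073970} can be cited for the parts that are purely bookkeeping.

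For (1), the holonomy of Definition \ref{def:holonomy-of-diagram} is a priori defined on a chosen transverse representative rather than on a homotopy class, so the real content is homotopy invariance. I would work with the fundamental groupoid of the diagram complement sketched just before the lemma, whose objects are the regions and whose generating morphisms are the two arcs (above and below) joining adjacent regions. Every homotopy of a transverse path is generated by finitely many elementary moves: a backtrack cancellation, a slide past a cup or cap, and a slide across a crossing. The first two preserve the holonomy word directly from the definition and the orientation sign $\varepsilon_i$, so the only substantive move is the slide across a crossing. Its invariance is exactly the assertion that the three relations $\chi_1^+\chi_2^+ = \chi_4^+\chi_3^+$, $\chi_1^-\chi_2^+ = \chi_4^+\chi_3^-$, $\chi_1^-\chi_2^-=\chi_4^-\chi_3^-$ (together with the negative-crossing analogues) hold. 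I would establish these by making the outer automorphism $\rmat$ explicit on $\Fcent\otimes\Fcent$ from the defining relations (\ref{eq:rmat-rel-i}--\ref{eq:rmat-rel-iv}), reading off $\chi_3(E^\ell),\chi_3(F^\ell),\chi_3(K^\ell)$ and the analogues for $\chi_4$ in terms of $\chi_1,\chi_2$ via the crossing rule $\chi_1\otimes\chi_2 = (\chi_3\otimes\chi_4)\rmat$, and substituting into the upper and lower holonomy matrices. Each relation then becomes a $2\times2$ matrix identity in the parameters $\kappa,\epsilon,\phi$ and is checked directly. Granting the relations, the holonomy descends to the groupoid, and fixing a basepoint yields a well-defined homomorphism, i.e.\ a point of $\Hom{}{\knotgrpg{K}}{\SL_2(\C)}$ once the global fractional eigenvalue $\mu$ is chosen.

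For (2), I would use that each Reidemeister move is supported in a disk $\mathbb D$ while the diagrams agree outside $\mathbb D$; since the two complements are homeomorphic, it suffices to show the coloring rules induce a bijection between colorings of the two local pictures that agree on the boundary segments of $\mathbb D$ and induce the same peripheral holonomies. This reduces each move to an algebraic fact about $\rmat$. Move R2 reduces to the invertibility of $\rmat$: applying the positive- and then the negative-crossing rule of Figure \ref{fig:crossing-rules} returns the incoming characters. Move R3 reduces to the Yang--Baxter equations satisfied by $\rmat$ quoted above: both colorings of the triple crossing express the outgoing characters in terms of the incoming ones through the two orders of applying $\rmat$ to adjacent pairs, and these agree by Yang--Baxter. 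Move R1 reduces to a direct check that the kink leaves the character on the through-strand unchanged, using one crossing relation together with the behaviour at the adjacent cap or cup. Throughout, the fractional eigenvalue $\mu$ is unaffected since each strand remains on the same connected component, so the single global $\mu$ is simply carried along.

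The main obstacle I anticipate is the crossing computation underlying (1): extracting $\chi_3,\chi_4$ requires making $\rmat$ explicit on the central subalgebra $\Fcent\otimes\Fcent$ — in particular controlling the localizations at $W$ and $X$ — and then verifying that the resulting rational formulas satisfy all three matrix relations at once. Once those relations are in hand, the Reidemeister verification in (2) is comparatively formal, being a repackaging of invertibility and Yang--Baxter, which hold for $\rmat$ by hypothesis. I would also note an alternative that trades one computation for another: define $\chi_3,\chi_4$ as the unique solution of the three relations (admissibility guaranteeing existence and uniqueness) and then only check compatibility with $\rmat$.
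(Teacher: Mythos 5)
Your proposal follows essentially the same route as the paper's (quite brief) proof sketch: part (1) is reduced to checking that the groupoid relations on the holonomies $\chi_i^{\pm}$ are compatible with the defining relations (\ref{eq:rmat-rel-i}--\ref{eq:rmat-rel-iv}) of $\rmat$ by direct algebra, and part (2) is reduced to the invertibility of $\rmat$ and the Yang--Baxter relation, packaged via the generically defined biquandle of \cite{MR4073970}. The additional detail you supply (the elementary-move decomposition of homotopies, the disk-local argument for Reidemeister moves) is a reasonable elaboration of the same argument rather than a different approach.
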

\begin{proof}[Proof sketch]
  (1)
  It suffices to check the relations on the holonomies $\chi_i^{\pm}$ from the fundamental groupoid of the diagram are compatible with the relations (\ref{eq:rmat-rel-i}--\ref{eq:rmat-rel-iv}) coming from $\rmat$.
  This is straightforward algebra.

  (2)
  Fundamentally this holds because $\rmat$ is invertible (RII move) and satisfies the Yang-Baxter/braid relation (RIII move).
  Formally, we say that the mapping $(\chi_1, \chi_2) \to (\chi_4, \chi_3)$ is a \emph{generically defined biquandle}, as in \cite[Definition 5.1]{MR4073970}.
\end{proof}

\begin{cor}
  Let $D$ be a diagram of a tangle $T$.
  The holonomy representation of $D$ gives a well-defined representation of the generalized knot group of $T$.
\end{cor}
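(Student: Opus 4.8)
The plan is to deduce the corollary from the preceding lemma together with the dictionary between colorings and holonomies built up in this section. The content splits into three tasks: recognizing the holonomy assignment as a representation of the fundamental group of the tangle complement, supplying the extra ``generalized'' data and checking its consistency, and confirming independence of the chosen diagram.

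First I would fix a basepoint region of $D$ and work with the fundamental groupoid of the diagram complement, whose generating morphisms are the above/below paths $\chi_i^{\pm}$ crossing each segment. Restricting to loops based at the chosen region recovers $\knotgrp(T)$, the fundamental group of the complement of $T$ in $[0,1]^3$; this is the groupoid form of the Wirtinger presentation, with generators the meridian paths and relations exactly those read off at crossings (the three displayed relations for a positive crossing, together with their evident analogues for a negative crossing). Part (1) of the previous lemma says precisely that the assignment $\gamma \mapsto \rho(\gamma)$ of Definition \ref{def:holonomy-of-diagram} respects these relations, so it descends to a genuine homomorphism $\rho \colon \knotgrp(T) \to \SL_2(\C)$.

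Next I would upgrade $\rho$ to a representation of the generalized knot group. By the tangle version of Definition \ref{def:knotgrpg}, this is a pair $(\rho, \mu)$ in which $\mu$ is a fractional eigenvalue of a meridian, i.e.\ $\mu^{\ell} + \mu^{-\ell} = \tr \rho(\mathfrak m)$. The coloring already supplies $\mu$ as the common fractional eigenvalue of the (single) connected component, so only the trace condition needs checking. But every Wirtinger meridian colored by a character $\chi$ has holonomy conjugate to the defactorization $\psi(\chi) = \chi^+ (\chi^-)^{-1}$, as computed just before the lemma; hence $\tr \rho(\mathfrak m) = \tr \psi(\chi) = \tr \chi = \mu^{\ell} + \mu^{-\ell}$, where the last equality is the defining property of the fractional eigenvalue. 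Thus $(\rho, \mu)$ satisfies exactly the condition required of a generalized representation.

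Finally, independence of the diagram follows from part (2) of the lemma: any two diagrams of $T$ carrying compatible colorings are related by planar isotopy and Reidemeister moves, under which the colorings and their induced holonomies are preserved, so the resulting homomorphism $\knotgrpg(T) \to \SL_2(\C)$ does not depend on $D$. The only genuinely content-bearing step is the first one---identifying the region groupoid with $\knotgrp(T)$ and matching the crossing relations with the Wirtinger relations---but almost all of this is already packaged into the lemma, so the corollary is largely bookkeeping. The one point deserving care is the basepoint ambiguity: changing the basepoint region conjugates the entire representation by a word in the $\chi_i^+$, so ``well-defined'' must be understood up to conjugacy, which is harmless since $\kare$ is conjugation-invariant.
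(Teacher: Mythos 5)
Your proposal is correct and follows essentially the same route as the paper, which states this corollary without a separate proof as an immediate consequence of the preceding lemma; your elaboration (groupoid-to-Wirtinger identification, the trace condition $\tr\psi(\chi) = \tr\chi = \mu^{\ell}+\mu^{-\ell}$ for the fractional eigenvalue, and basepoint-conjugacy caveat) fills in exactly the intended bookkeeping.
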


In particular, this corollary says that the function $D \mapsto (K, \rho)$ taking a colored diagram to a knot plus a generalized representation of its complement is well-defined.
However, it is not surjective:
if we fix a diagram $D$ of a knot $K$, not every representation $\rho \in \Hom{}{\knotgrpg}{\SL_2(\C)}$ will correspond to a valid coloring of the diagram, because the map $\rmat$ is only partially defined on the space of characters $(\chi_1, \chi_2)$, and because not every element of $\SL_2(\C)$ lies in the image of the holonomy map.
(For example, any coloring of the diagram in Figure \ref{fig:figure-eight} assigns a path wrapping once around the segment labeled $x_0$ a matrix with nonzero $1,1$ entry.)
However, we can always conjugate $\rho$ so that it can be described by a colored diagram.

\begin{thm}
  Let $L$ be a link and $D$ a diagram of $L$.
  Any $\rho \in \Hom{}{\knotgrpg(L)}{\SL_2(\C)}$  is conjugate to some $\rho' = g \rho g^{-1}$ that can be expressed as the holonomy of a coloring of $D$.
\end{thm}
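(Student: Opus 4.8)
The plan is to reduce the statement to the existence of a Gauss-type factorization of the meridian holonomies and then to clear the two failure modes flagged above --- non-admissibility of $\rmat$ and non-factorizability --- by a single generic global conjugation. The basic observation is that coloring a segment by a character $\chi$ amounts to factoring the holonomy of a meridian of that segment, computed in a suitable frame, as $\psi(\chi) = \chi^+(\chi^-)^{-1}$ with $\chi^+, \chi^-$ the lower- and upper-triangular matrices of the definition preceding Definition~\ref{def:holonomy-of-diagram}. A direct computation shows that a matrix $g \in \SL_2(\C)$ admits such a factorization if and only if its $(1,1)$-entry is nonzero, and that in this case $\chi$ is uniquely recovered by $\chi(K^\ell) = g_{11}$, $\chi(E^\ell) = -g_{12}$, $\chi(K^\ell F^\ell) = -g_{21}$. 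Thus the segment characters, if they exist, are forced, and the whole problem becomes: conjugate $\rho$ so that every meridian we must factor has nonzero $(1,1)$-entry in its frame and so that the resulting characters are admissible at every crossing. The fractional-eigenvalue data is inert here: for a link we make one choice of $\mu$ per component, equal to the given value, and compatibility is automatic since $\tr \psi(\chi) = \mu^\ell + \mu^{-\ell}$.

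For the construction I would fix a base region of the diagram complement and a global conjugation $g$, determine the characters on the incoming segment(s) by factoring the corresponding meridian holonomies of $g\rho g^{-1}$, and then propagate the characters across the remaining segments using $\rmat$ (exactly as the labels $x_1, x_2, x_3$ of Figure~\ref{fig:figure-eight} are determined by $x_0, y_0$). I do \emph{not} need to verify the crossing relations of Figure~\ref{fig:crossing-rules} by hand: the groupoid relations $\chi_1^+\chi_2^+ = \chi_4^+\chi_3^+$, $\chi_1^-\chi_2^+ = \chi_4^+\chi_3^-$, $\chi_1^-\chi_2^- = \chi_4^-\chi_3^-$ are precisely the ones shown, in the preceding lemma, to be equivalent to the defining relations (\ref{eq:rmat-rel-i}--\ref{eq:rmat-rel-iv}) of $\rmat$, and defactorization $\chi \mapsto \psi(\chi)$ realizes the biquandle operation carried by $\rmat$ as a factorization of the conjugation quandle of $\SL_2(\C)$. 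Since $\rho$ is already an honest representation, the conjugation-quandle relations hold at every crossing, so the closing-up (single-valuedness of the characters around loops of the diagram, e.g.\@ $x_3 = x_0$) is inherited from $\rho$ via the uniqueness of the Gauss decomposition. Hence, once all the factorizations exist and every crossing is admissible, the propagated data is a genuine coloring and, because its meridian holonomies are the $\psi(\chi_s)$ matching the meridians of $g\rho g^{-1}$, its holonomy representation is $g\rho g^{-1}$.

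It remains to choose $g$ so that all the finitely many factorizations exist and all crossings are admissible. Each factorizability condition is the nonvanishing of a $(1,1)$-entry, and each admissibility condition is the nonvanishing of $1 - \chi_1(K^{-\ell} E^\ell)\chi_2(F^\ell K^\ell)$; I would show that each of these is a regular (respectively rational) function of $g$ that is not identically zero, so that its vanishing locus is a proper Zariski-closed subset of $\SL_2(\C)$ and a generic $g$ avoids the finite union of all of them. For the factorizability conditions the key elementary fact is that the only matrix all of whose conjugates have vanishing $(1,1)$-entry is the zero matrix; since meridians are invertible, $(g\rho(\mathfrak m)g^{-1})_{11}$ cannot vanish identically in $g$. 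For admissibility one uses the explicit form above, together with the fact (already recorded in the discussion of admissibility) that the set of admissible pairs is Zariski open and dense.

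The step I expect to be the main obstacle is exactly this genericity argument, because the conditions are not independent: the frame in which a meridian is factored, and the characters feeding each admissibility test, are words in the characters produced at earlier segments, so the functions of $g$ being tested arise only after the previous nonvanishing conditions have cut out an open locus. The technical heart is to verify that none of these successively-defined functions degenerates to the zero function on that locus --- rather than merely bookkeeping the propagation. I would handle this by ordering the segments so each character is produced before it is used downstream, checking that on the open set cut out by the earlier nonvanishing conditions the next $(1,1)$-entry (respectively admissibility function) extends to a rational function that is visibly nonzero at some point (for instance where an off-diagonal meridian entry vanishes, forcing the admissibility function to equal $1$), and taking the common refinement of the resulting dense open sets.
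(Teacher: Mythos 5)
Your overall strategy coincides with the one the paper actually uses: the paper's proof of this theorem is a pointer to \cite[Theorem 5.10]{MR4073970} plus the one-line idea that the inadmissible locus is a proper Zariski-closed set which a global conjugation can avoid. Your reduction is the right one, and the details you add are correct: a matrix factors as $\psi(\chi)=\chi^+(\chi^-)^{-1}$ iff its $(1,1)$-entry is nonzero, in which case $\chi$ is uniquely determined; the fractional eigenvalue data is inert; and single-valuedness of the propagated coloring follows from the groupoid relations together with uniqueness of the factorization, because $\rho$ is already a genuine representation.

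The gap is in the genericity induction, which you correctly identify as the technical heart but do not close; since everything else is formal, this step \emph{is} the theorem. Two specific problems. First, the lemma ``the only matrix all of whose $\SL_2(\C)$-conjugates have vanishing $(1,1)$-entry is zero'' disposes only of the first segment, where the framed meridian is $g\rho(\mathfrak m_1)g^{-1}$ and $g$ is a free conjugator. For every later segment the framed meridian is $k(g)^{-1}\rho(\mathfrak m_s)k(g)$ with $k(g)=g^{-1}(y_1^+\cdots y_k^+)$, where the $y_i$ are characters extracted at earlier steps; $k(g)$ is a constrained rational function of $g$, not a free variable, so you cannot conclude nonvanishing of the $(1,1)$-entry from that lemma, and you would need to prove that $g\mapsto k(g)$ is dominant (or argue non-degeneracy some other way) on the open locus $U$ where the earlier steps succeed. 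Second, to show an admissibility function $1-\chi_1(K^{-\ell}E^\ell)\chi_2(F^\ell K^\ell)$ is not identically zero on $U$, exhibiting a configuration of matrices where it equals $1$ is not enough: you must produce a $g\in U$ realizing that configuration, and $U$ is only implicitly described by the previous conditions. This successive-nondegeneracy problem is precisely what the ``generic biquandle factorization'' formalism of \cite[Sections 5--6]{MR4073970} packages: the genericity axioms (dominance of the relevant partial maps) are verified once for this biquandle, after which the induction over the diagram is formal. Either import that result, as the paper does, or supply the dominance arguments segment by segment; as written, your sketch asserts rather than proves them.
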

\begin{proof}
  See \cite[Section 5]{MR4073970}, in particular Theorem 5.10.
  The idea is that inadmissible colorings are rare, so their complement is a Zariski open dense set, so they can always be avoided.
\end{proof}

\section{Construction of the invariant}
Once we have a colored diagram $D$ representing a knot $K$ and a representation $\rho$ of $\knotgrpg(K)$ we can compute the complex number $\kare(K, \rho)$ by following a version of the Reshetikhin--Turaev construction.

As mentioned in the introduction we need to make a slight variation.
The modules $\End{\irrep{\chi, \mu}}$ have vanishing quantum dimension, so the value of the RT construction on any closed diagram will be $0$.
To fix this, we need to cut open the closed diagram $D$ along a strand to obtain a $(1,1)$-tangle diagram $D'$ whose closure is $D$.
For example, Figure \ref{fig:figure-eight} shows such a such a cut-open diagram of the figure-eight knot.
We will show later that the value of the invariant does not depend on where we cut open the knot.

By isotoping the diagram $D'$ we can assume it is in Morse position.%
\footnote{
  This means that the crossings of the diagrams and critical points of the edges all occur in uniform vertical layers, as in as in Figure \ref{fig:figure-eight-morse}.
}
We then use the following recipe to produce a  morphism of $\funcalg$-bimodules:
\begin{itemize}
  \item Vertical upward-pointing strands colored by $(\chi, \mu)$ are assigned the simple bimodule $\End{\irrep{\chi, \mu}}$; vertical downward-pointing strands instead get the dual module.
  \item Positive crossings (Figure \ref{fig:crossing-rule-positive}) are assigned the map $c = \tau R$ acting on the modules associated to the incoming (bottom) strands (here $\tau(v \otimes w) = w \otimes v$).
    Negative crossings (Figure \ref{fig:crossing-rule-negative}) are instead assigned  the action of $c^{-1} = \rmat^{-1} \tau$.
  \item Cups and caps get evaluation and coevaluation morphisms.
    As usual, some of these are twisted by a pivotal element $K^{r - 1}$.
    The right set of conventions \cite[equation (28)]{MR4073970} is to set
    \begin{align*}
      &\overleftarrow{\operatorname{coev}} : \C \to V \otimes V^*
      & 
      & 1 \mapsto \sum v_j \otimes v^j
      \\
      &\overleftarrow{\operatorname{ev}} : V^* \otimes V \to \C
      & 
      & f \otimes w = f(w)
      \\
      &\overrightarrow{\operatorname{ev}} : V \otimes V^* \to \C
      & 
      & v \otimes f \mapsto f(K^{1-\ell} v)
      \\
      &\overrightarrow{\operatorname{coev}} : \C \to V^* \otimes V
      & 
      & 1 \mapsto \sum v^j \otimes K^{\ell-1} v_j
    \end{align*}
    for each module $V$ with basis $\{v_j\}$ and dual basis $\{v^j\}$.
\end{itemize}
Notice that the pivotal element is $K^{\ell -1}$, not $K^{-1}$ as for generic $q$.
This is worked out in \cite[Appendix A]{MR1881401}; in particular, compare Propositions A.5 and A.13.

For example, we assign the Morse tangle diagram in Figure \ref{fig:figure-eight-morse} the map
\begin{equation*}
  (\overleftarrow{\operatorname{ev}} \otimes \id)
  (\id \otimes c)
  (c \otimes \id)
  (\id \otimes c^{-1})
  (c \otimes \id)
  (\id \otimes \overleftarrow{\operatorname{coev}})
  :
  \End{\irrep{\chi_0, \mu}}
  \to
  \End{\irrep{\chi_0, \mu }}
\end{equation*}
acting on modules $\End{\irrep{\chi, \mu}}$ and their duals%
\footnote{
  To avoid working with dual modules we can choose to work with braid diagrams and their closures, so that dual modules only show up in terms of (partial) quantum traces.
}
determined by the coloring of the diagram.
The color $\chi_0$ is the one assigned to the free strands at the top and the bottom.
\begin{figure}
\begingroup%
  \makeatletter%
  \providecommand\color[2][]{%
    \errmessage{(Inkscape) Color is used for the text in Inkscape, but the package 'color.sty' is not loaded}%
    \renewcommand\color[2][]{}%
  }%
  \providecommand\transparent[1]{%
    \errmessage{(Inkscape) Transparency is used (non-zero) for the text in Inkscape, but the package 'transparent.sty' is not loaded}%
    \renewcommand\transparent[1]{}%
  }%
  \providecommand\rotatebox[2]{#2}%
  \newcommand*\fsize{\dimexpr\f@size pt\relax}%
  \newcommand*\lineheight[1]{\fontsize{\fsize}{#1\fsize}\selectfont}%
  \ifx\svgwidth\undefined%
    \setlength{\unitlength}{92.92604542bp}%
    \ifx\svgscale\undefined%
      \relax%
    \else%
      \setlength{\unitlength}{\unitlength * \real{\svgscale}}%
    \fi%
  \else%
    \setlength{\unitlength}{\svgwidth}%
  \fi%
  \global\let\svgwidth\undefined%
  \global\let\svgscale\undefined%
  \makeatother%
  \begin{picture}(1,3.10329428)%
    \lineheight{1}%
    \setlength\tabcolsep{0pt}%
    \put(0,0){\includegraphics[width=\unitlength,page=1]{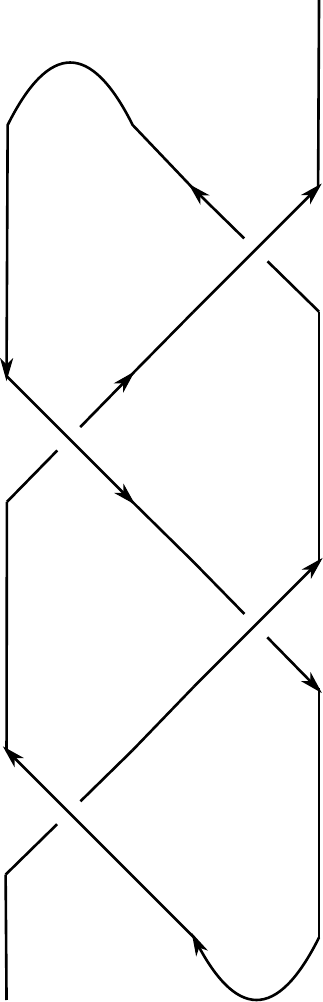}}%
    \put(0.03799147,0.12341339){\makebox(0,0)[lt]{\lineheight{1.25}\smash{\begin{tabular}[t]{l}$x_0$\end{tabular}}}}%
    \put(0.684839,0.18619891){\makebox(0,0)[lt]{\lineheight{1.25}\smash{\begin{tabular}[t]{l}$y_0$\end{tabular}}}}%
  \end{picture}%
\endgroup%

  \caption{A Morse presentation of the tangle diagram in Figure \ref{fig:figure-eight}.
  }
  \label{fig:figure-eight-morse}
\end{figure}
In general, for any diagram $D'$ whose free strands are colored by $(\chi, \mu)$ we obtain a map
\[
  \kare(D') : \End{\irrep{\chi,\mu}} \to \End{\irrep{\chi,\mu}}.
\]
Because $\End{\irrep{\chi,\mu}}$ is a simple bimodule, any such map is a scalar multiple of the identity:
\[
  \kare(D') =
  \left\langle \kare(D') \right\rangle \id_{\End{\irrep{\chi,\mu}}}.
\]
We then define
\[
  \kare(K, \rho) = \left\langle \kare(D') \right\rangle \in \C.
\]
\begin{lem}
  \begin{enumerate}
    \item The number $\kare(D')$ does not depend on the choice of where to cut the diagram open, so it is an invariant of $D$.
    \item The number $\kare(D)$ does not depend on the choice of diagram $D$ used to represent $(K, \rho)$.
  \end{enumerate}
\end{lem}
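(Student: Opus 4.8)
The plan is to prove both statements by reducing them to local moves and exploiting two facts established earlier: that $\End{\irrep{\chi,\mu}}$ is a \emph{simple} bimodule, so any endomorphism is a scalar multiple of $\id$ (Schur), and that the structure maps are either invertible (the braiding $\rmat$) or satisfy the duality and Yang--Baxter identities recorded in the previous sections.

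For Part (1) I would fix a closed colored diagram $D$ and two cut points $p,q$, lying on segments colored $\chi_p,\chi_q$ with the same fractional eigenvalue $\mu$ (legitimate since $D$ has a single component). It suffices to show the scalar is unchanged when $q$ is obtained from $p$ by sliding the cut across one elementary tangle, since any two cut points are joined by finitely many such slides around the knot. Reading the Morse diagram as a composite of structure maps, sliding the cut across an elementary piece replaces the $(1,1)$-map $f_p = v\circ u$ by $f_q = u\circ v$, where $u$ is the morphism assigned to the piece being crossed and $v$ is everything else. Because both endpoints are simple, $f_p=\langle f_p\rangle\,\id$ and $f_q=\langle f_q\rangle\,\id$; and when $u$ is an \emph{isomorphism} one has $u\circ v = u\circ(v\circ u)\circ u^{-1} = \langle f_p\rangle\,\id$, so the two scalars agree. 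Sliding across a crossing is exactly this conjugation argument with $u=c^{\pm1}$, invertible on the admissible locus, and sliding along a straight strand is trivial.

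The genuine work is sliding the cut across a cup or a cap, where $u$ is not invertible. Here I would instead use the duality (zig-zag) identities together with the chosen pivotal element $K^{\ell-1}$ to re-express the partial closure on one side of the extremum as the partial closure on the other; that is, I would verify that the left and right partial traces through a turnback agree. This ambidexterity is what ultimately makes $\langle\kare(D')\rangle$ a genuinely \emph{cyclic} invariant of the closed diagram, hence independent of the cut. I expect this to be the main obstacle, and it is precisely the point at which the specific conventions for $\overrightarrow{\operatorname{ev}},\overleftarrow{\operatorname{ev}},\overrightarrow{\operatorname{coev}},\overleftarrow{\operatorname{coev}}$ (and the pivotal element $K^{\ell-1}$ rather than $K^{-1}$) must be invoked.

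For Part (2), by the earlier lemma that colorings are compatible with Reidemeister moves, any two colored diagrams representing the same $(K,\rho)$ are connected by a sequence of colored Reidemeister and Morse moves, so it suffices to check invariance of $\kare(D')$ under each. Invariance under RII is the relation $c\,c^{-1}=\id$ (invertibility of $\rmat$), invariance under RIII is the Yang--Baxter equation already recorded for $\rmat$, and the Morse/zig-zag moves follow from the duality axioms for the cups and caps. The delicate case is RI, which a priori introduces a framing anomaly given by a twist scalar; I expect this anomaly to cancel exactly because we work with $\End{\irrep{\chi,\mu}}\cong\irrep{\chi,\mu}\otimes\irrep{\chi,\mu}^{*}$, whose two tensor factors contribute reciprocal twist factors. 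Verifying this cancellation (once more using the pivotal element $K^{\ell-1}$) is the substantive step, and it explains structurally why passing to the simple bimodule $\End{\irrep{\chi,\mu}}$ yields an honest, unframed invariant. Finally, since conjugating $\rho$ by $g\in\SL_2(\C)$ merely conjugates all the segment colors and intertwines the associated bimodule maps by an isomorphism, the resulting scalar is unchanged, so the value descends to $(K,\rho)$ itself.
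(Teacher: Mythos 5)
Your reduction of part (1) to sliding the cut point across elementary pieces is a legitimate direct route, and the conjugation argument $\langle v\circ u\rangle=\langle u\circ v\rangle$ for invertible $u$ does dispose of crossings and straight strands. But the step you defer---sliding across a cup or cap---is the entire content of the lemma, and it does not follow from the zig-zag identities and the choice of pivotal element $K^{\ell-1}$. What you need is that the left and right partial traces of endomorphisms of tensor squares of the objects $\End{\irrep{\chi,\mu}}$ agree; for a simple object of vanishing quantum dimension in a pivotal category this is \emph{false} in general, and the objects for which it holds are precisely the \emph{ambidextrous} objects of \cite{MR2480500}. Establishing ambidexterity is a genuine computation, not a consequence of the duality axioms, which is exactly why that notion was introduced. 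The paper's proof is an appeal to this machinery: it invokes \cite[Lemma 2]{MR2480500}, takes the ambidextrous object $W=\End{\irrep{\epsilon,1}}$ with $\epsilon$ the identity character, and then argues that the normalization factors $S'(U,W)$ and $S'(W,U)$ are constant on gauge classes, which for a knot (all meridians conjugate) is what lets one ignore them. Your outline lands on the point where this input is required and then asserts it with ``I would verify''; as written the proof is incomplete precisely there.

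A secondary issue: in part (2) your treatment of RI via ``reciprocal twist factors'' on the two tensor factors of $\End{V}\cong V\otimes V^{*}$ is not correct as stated. On a simple object $V$ in a ribbon category $\theta_{V^{*}}$ acts by the \emph{same} scalar as $\theta_{V}$, not its reciprocal, and $\theta_{V\otimes V^{*}}$ additionally involves the double braiding; in the holonomy setting the RI move is further constrained by the biquandle compatibility of the colors, so the bookkeeping is different from the framed RT anomaly you have in mind. The paper folds part (2) into part (1) together with the statement that the construction is a functor on colored tangle diagrams respecting the colored Reidemeister moves established earlier, so no separate twist cancellation is computed. Finally, the conjugation-invariance of $\rho$ you address at the end is not part of this lemma (it is the subject of the following one).
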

\begin{proof}
  Claim (2) follows from (1) and the usual Reshetikhin--Turaev construction.
  (1) is a special case of \cite[Lemma 2]{MR2480500}.
  In that article, \citeauthor{MR2480500} show how to extend our cutting-open construction to an invariant of links by using the values on certain open Hopf links containing an \emph{ambidextrous object} $W$.
  In our we can choose $W = \End{\irrep{\epsilon, 1}}$, where $\epsilon$ is the identity element of $\SL_2^*(\C)$, which is to say that
  \[
    \epsilon(K^\ell) = 1, \epsilon(E^\ell) = \epsilon(F^\ell) = 0.
  \]
  It can be shown that, in the notation of \cite[Lemma 2]{MR2480500},
  \[
    S'(W,U) = S'(W,V) \text{ and } S'(U,W) = S'(V,W)
  \]
  whenever the simple objects $U$ and $V$ have holonomy lying in the same gauge class, for example by considering a diagram like Figure \ref{fig:gauge-transf-example} with $D$ an open Hopf link.
  For a knot the modules coloring any strand of any diagram lie in the same gauge class (their meridians are conjugate), so we can ignore the normalization factors $S'$.
\end{proof}
It follows that $\kare(K,\rho)$ is well-defined as an invariant of $K$ and $\rho \in \repvarg K$.

\begin{figure}
  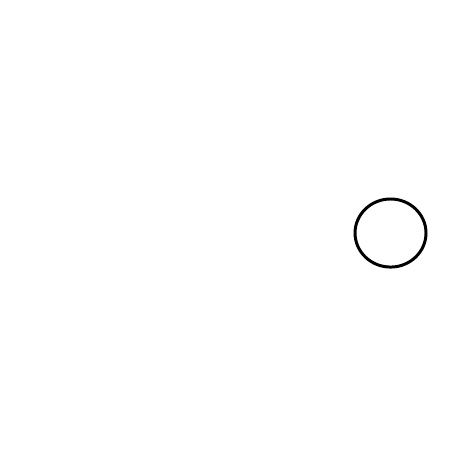
  \caption{The diagram $D' : \chi' \to \chi'$ is a gauge transformation of (i.e.\@ its holonomy is conjugate to) the diagram $D : \chi \to \chi$.}
  \label{fig:gauge-transf-example}
\end{figure}

\begin{lem}
  $\kare(K,\rho)$ does not depend on the gauge class of $\rho$: if $\rho'$ is another representation with $\rho' = g \rho g^{-1}$ for $g \in \SL_2(\C)$, then $\kare(K, \rho) = \kare(K, \rho')$.
\end{lem}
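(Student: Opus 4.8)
The plan is to realize the global conjugation diagrammatically and then appeal to naturality, so that the gauge change is absorbed into an isomorphism of the boundary bimodule and therefore leaves the resulting scalar untouched. Fix a $(1,1)$-tangle diagram $D'$ computing $\kare(K,\rho)$, with free strands colored by $(\chi_0, \mu)$. Replacing $\rho$ by $\rho' = g \rho g^{-1}$ conjugates every groupoid holonomy $\rho(\gamma)$ by $g$; re-extracting a coloring then produces a new colored diagram $D''$ whose segments carry characters $\chi^g$ obtained from the $\chi$ by the gauge action on $\spec \Fcent = \SL_2^*(\C)$ induced by conjugation in $\SL_2(\C)$ through the defactorization $\psi$. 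The first step is to make this action on characters precise and to check that it carries admissible crossings to admissible crossings, so that $D''$ is again a valid colored diagram computing $\kare(K, \rho')$.

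The second step is to produce, for each segment, a bimodule isomorphism $\Phi_\chi \colon \End{\irrep{\chi,\mu}} \to \End{\irrep{\chi^g,\mu}}$ and to verify that these are \emph{natural} with respect to all the structure maps of the functor: they must intertwine the braiding $\rmat$ at each crossing (Figure \ref{fig:crossing-rules}) and commute with the evaluation and coevaluation morphisms at cups and caps. Granting this compatibility, the morphism assigned to $D''$ equals the morphism assigned to $D'$ conjugated by $\Phi_{\chi_0}$ on the incoming and outgoing boundary strand. Since both morphisms are scalar multiples of the identity on a simple bimodule, conjugation by the invertible $\Phi_{\chi_0}$ cannot change the scalar, yielding $\kare(K,\rho) = \kare(K,\rho')$.

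The main obstacle is exactly this naturality check, and it is genuinely the crux: conjugation by a general $g$ does not preserve the triangular factorization $\psi(\chi) = \chi^+(\chi^-)^{-1}$ used to define the coloring, so the gauge action on characters is \emph{not} simply ``conjugate the holonomy matrix'' and the $\Phi_\chi$ are correspondingly nontrivial. I would tame this by reducing to generators of $\SL_2(\C)$: it suffices to treat $g$ in the diagonal torus and in the upper- and lower-triangular unipotent subgroups separately, since these generate $\SL_2(\C)$ and the naturality property is closed under composition. For each such piece the action on $(\kappa, \epsilon, \phi)$ is explicit, the factorization is preserved up to a controlled twist, and the corresponding $\Phi_\chi$ can be written down and checked directly against the defining relations \eqref{eq:rmat-rel-i}--\eqref{eq:rmat-rel-iv} and the cup/cap conventions. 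The cleanest packaging, which I would prefer to keep the bookkeeping manageable, is to phrase the whole argument as the statement that the biquandle assignment $(\chi_1,\chi_2) \mapsto (\chi_4,\chi_3)$ is gauge-equivariant in the sense of \cite[Section 5]{MR4073970}; the diagrammatic gauge transformation of Figure \ref{fig:gauge-transf-example} is precisely the picture implementing it, and the cut-independence of the preceding lemma guarantees that the leftover normalization factors $S'$ likewise depend only on the gauge class.
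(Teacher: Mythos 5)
There is a genuine gap, and it sits exactly where you place ``the crux.'' Your main route asks for segment-wise \emph{bimodule} isomorphisms $\Phi_\chi : \End{\irrep{\chi,\mu}} \to \End{\irrep{\chi^g,\mu}}$ that are natural for $\rmat$ and for the cups and caps. As literally stated such maps cannot exist: $\End{\irrep{\chi,\mu}} \iso \funcalg/\ker\hat\chi$ and $\End{\irrep{\chi^g,\mu}}$ have \emph{different} central characters, so there is no $\funcalg$-bimodule map between them at all, let alone a natural one. The only candidate substitute is a linear isomorphism twisted by the quantum coadjoint action of $\SL_2(\C)$ on $\spec\Fcent$, but that action lifts only to a completion/localization of $\funcalg$, not to automorphisms of $\funcalg$ itself, and its compatibility with the defining relations of $\rmat$ and with the pivotal structure is precisely the nontrivial content you would need to verify; the reduction to torus and unipotent generators does not make that verification appear. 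A second, smaller problem: your first step asserts that gauge transformation carries admissible crossings to admissible crossings, which is false in general --- admissibility is only generic, which is why the paper's earlier theorem says merely that \emph{some} conjugate of $\rho$ admits a coloring. Any complete argument has to route around inadmissible conjugates rather than assume they do not occur.

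The paper's actual proof is the ``packaging'' you defer to in your final sentence, and it is worth noting that it is not a repackaging of your intertwiner argument but a genuinely different mechanism: following \cite[Theorem 5.11]{MR4073970}, the gauge transformation $\rho \mapsto g\rho g^{-1}$ is realized by braiding an auxiliary strand $\gamma$ (with holonomy $g$) past the entire diagram as in Figure \ref{fig:gauge-transf-example}. The ``intertwiner'' is therefore inherently a two-strand operation $\End{\irrep{\gamma}} \otimes \End{\irrep{\chi,\mu}} \to \End{\irrep{\gamma'}} \otimes \End{\irrep{\chi^g,\mu}}$ given by $\rmat$ itself, which exists by construction and is compatible with everything because the functor respects Reidemeister moves; no one-strand $\Phi_\chi$ is ever needed. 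The price is that one must check the normalization is gauge-invariant, which is where the modified dimension enters: the paper takes $\mathbf{d}(\End{\irrep{\chi,\mu}}) = 1$ and observes that for a knot every strand color lies in a single gauge class, so this is automatic. Your closing appeal to cut-independence gestures at this but does not substitute for it.
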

\begin{proof}
  This is one of the main results of \citeauthor{MR4073970}, specifically \cite[Theorem 5.11]{MR4073970}.
  The key idea of the proof is to view gauge transformations $\rho \mapsto g \rho g^{-1}$ in terms of applying Reidemeister moves to the diagrams, as in Figure \ref{fig:gauge-transf-example}.
  Since the functor defining $\kare$ respects Reidemeister moves it also respects gauge transformations.

  A key hypothesis is that the modified dimension function is gauge-invariant.
  In our case, we are choosing the naive modified dimension $\mathbf{d}(\End{\irrep{\chi, \mu}}) = 1$ for every $\chi$ and $\mu$.
  Because we are only working with knots, \emph{every} strand is colored by something in the gauge class of $(\chi, \mu)$ (which is in fact determined by $\mu$) so this modified dimension function is gauge-invariant.
\end{proof}
  As mentioned in the introduction, extending our invariant to links would require a more sophisticated computation of the modified dimensions $\mathbf{d}$.

\section{The representation variety, the character variety, and the \texorpdfstring{$A$}{A}-polynomial}

One perspective on the invariant $\kare$ is that it takes as input a knot $K$ and a generalized representation $\rho \in \Hom{}{\knotgrpg K}{\SL_2(\C)}$ and produces a complex number $\kare(K, \rho)$.
However, it is more informative to think of the \emph{family} of numbers $\kare(K, \rho)$ as $\rho$ ranges over the space of representations.
Because $\kare(K, \rho)$ depends only on the conjugacy class of $\rho$, we can instead think of $\kare(K)$ as a function on the \emph{generalized character variety} $\charvarg K$.

In fact, we can simplify further: the rather complicated space $\charvarg K$ is summarized by a simple generalization $\apolyg K$ of the $A$-polynomial of $K$, and we can think of $\kare (K)$ as a rational function on (some components of) the algebraic curve cut out by $\apolyg K$.

\begin{defn}
  The \emph{representation variety} of a knot $K$ is the space $\repvar K = \Hom{}{\pi(K)}{\SL_2(\C)}$ of homomorphisms from the knot group to $\SL_2(\C)$.
  $\repvar K$ is an affine variety, with coordinates given by the matrix elements of the images of the generators.
  The \emph{character variety} is the GIT quotient $\charvar K = \repvar K // \SL_2(\C)$, where $\SL_2(\C)$ acts on $\repvar K$ by conjugation.
\end{defn}

Explicitly, this means that the quotient map $\repvar K \to \charvar K$ is given by the trace.
Points of $\repvar K$ are representations of $\pi(K)$ into $\SL_2(\C)$, while points of $\charvar K$ are, roughly%
\footnote{
  Not every representation comes from a point of $\charvar K$ because of reducible, non-decomposable representations.
  However, it is true that every conjugacy class of completely reducible (hence abelian) representations and of irreducible representations comes from a point of $\charvar K$.
  These are the ones we usuallly care about, especially the irreducible representations.
}
speaking, conjugacy classes of such representations.
The Kashaev--Reshe\-tikhin invariant can be thought of a function $\charvar K \to \C$, except that we need slightly more data.

\begin{defn}
  Let $K$ be a knot, and let $\mathfrak m$ be a meridian of $K$.
  The \emph{$\ell$th generalized representation variety} $\repvarg K$ is the set of pairs $(\rho, \mu) \in \repvar K \times \C^\times$ with $\tr_\mathfrak m(\rho) = \tr \rho(\mathfrak m) = (\mu^{\ell} + \mu^{-\ell})$, i.e.\@ the fiber product
  \[
    \begin{tikzcd}
      \repvarg K \arrow[r] \arrow[d] & \C^\times \arrow[d, "f"]\\
      \repvar K \arrow[r, "\tr_{\mathfrak m}"] & \C
    \end{tikzcd}
  \]
  where $f(\mu) = (\mu^{\ell} + \mu^{-\ell})$.
  Similarly, we define the \emph{$\ell$th generalized character variety} to be the fiber product
  \[
    \begin{tikzcd}
      \charvarg K \arrow[r] \arrow[d] & \C^\times \arrow[d, "f"]\\
      \charvar K \arrow[r, "\tr_{\mathfrak m}"] & \C
    \end{tikzcd}
  \]
\end{defn}
Because every meridian of $K$ is conjugate to ${\mathfrak m}$ this definition does not depend on the choice of ${\mathfrak m}$.

\begin{thm}
  For any knot $K$, $\kare(K)$ is a well-defined function on the generalized character variety $\charvarg K$.
\end{thm}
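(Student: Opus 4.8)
The plan is to build on the two facts already in hand. First, the construction of the previous section produces a well-defined (rational) function $\kare(K,-)$ on the generalized representation variety $\repvarg K$, defined wherever the coloring of a chosen diagram is admissible. Second, the gauge-invariance lemma shows that this function is constant under the conjugation action $g \cdot (\rho,\mu) = (g \rho g^{-1}, \mu)$ of $\SL_2(\C)$ on $\repvarg K$; note that the coordinate $\mu$ is genuinely fixed by this action, since conjugation preserves $\tr \rho(\mathfrak m)$ and hence the defining condition $\tr\rho(\mathfrak m) = \mu^{\ell} + \mu^{-\ell}$ of the fiber product. So I would phrase the goal as: an $\SL_2(\C)$-invariant function on $\repvarg K$ descends to a function on the GIT quotient $\repvarg K /\!/ \SL_2(\C)$, and this quotient is exactly $\charvarg K$.

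The key step is therefore to identify $\repvarg K /\!/ \SL_2(\C)$ with $\charvarg K$. Both are fiber products over $\C$: writing $\C[t]$ for the coordinate ring of the base, $t$ maps to $\tr_{\mathfrak m}$ on the $\repvar K$ (resp.\ $\charvar K$) side and to $\mu^{\ell} + \mu^{-\ell}$ on the $\C^\times$ side. Since $\SL_2(\C)$ acts trivially on the $\C^\times$ factor and $\tr_{\mathfrak m}$ is already invariant (it factors through $\charvar K$ by definition of the GIT quotient), taking invariants commutes with the fiber product. Concretely, on coordinate rings,
\begin{align*}
  \C[\repvarg K]^{\SL_2(\C)}
  &= \bigl(\C[\repvar K] \otimes_{\C[t]} \C[\mu^{\pm 1}]\bigr)^{\SL_2(\C)} \\
  &= \C[\repvar K]^{\SL_2(\C)} \otimes_{\C[t]} \C[\mu^{\pm 1}]
  = \C[\charvarg K],
\end{align*}
where the middle equality uses that $\SL_2(\C)$ is reductive, so that the Reynolds operator is $\C[t]$-linear and taking invariants commutes with the flat base change $\C[t] \to \C[\mu^{\pm 1}]$. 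Hence the invariant function $\kare(K,-)$ descends to a rational function on $\charvarg K$, as claimed.

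I expect the main obstacle to be the gap between the GIT quotient and the naive orbit space. Gauge invariance only yields that $\kare$ is constant on conjugation orbits, whereas $\charvar K$ additionally identifies distinct non-closed orbits sharing a semisimplification. The resolution is that $\kare(K,-)$ is a \emph{rational}, not merely set-theoretic, invariant, and invariant rational functions do descend because $\C(\charvar K) = \C(\repvar K)^{\SL_2(\C)}$; equivalently, on the Zariski-dense open locus of irreducible representations the orbits are closed and coincide with the fibers of $\repvar K \to \charvar K$, so there the descent is immediate and then extends by density. Verifying that the fiber-product description of $\charvarg K$ is genuinely compatible with the GIT quotient—the displayed identity of invariant rings—is the one point requiring reductivity of $\SL_2(\C)$ and a little care; everything else is routine.
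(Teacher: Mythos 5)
Your proposal is correct and follows essentially the same route as the paper: the paper's proof simply observes that $\kare(K)$ is conjugation-invariant on $\repvarg K$ and that points of $\charvarg K$ correspond to conjugacy classes, so the function descends. Your additional care --- identifying $\repvarg K /\!/ \SL_2(\C)$ with $\charvarg K$ on coordinate rings via reductivity, and handling the mismatch between the GIT quotient and the naive orbit space by restricting to the dense locus of irreducible representations --- fills in details the paper leaves to a footnote, but it is elaboration of the same argument rather than a different approach.
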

\begin{proof}
  We showed that $\kare(K)$ is a function on the representation variety $\repvarg K$ that is invariant under conjugation.
  Each point  of $\charvarg K$ corresponds to a conjugacy class of representations, so $\kare(K)$ is well-defined on points of $\charvarg K$ as well.
\end{proof}

Explicitly describing the variety $\charvar K$ in terms of matrix coefficents of representations is somewhat difficult.
It is simpler to instead consider the curve cut out by a certain Laurent polynomial $\apoly K (M,L)$ associated to the knot $K$, the \emph{$A$-polynomial} \cite{MR1288467,MR1628754}.
Since $\kare$ is really a function on the \emph{generalized} character variety $\charvarg K$ we will need to generalize the $A$-polynomial as well.

We first recall the usual $A$-polynomial, discussed in more detail in \cite{MR1288467}.
Let $K$ be a knot in $S^3$.
We can think of the boundary of the complement $S^3 \setminus K$ as a torus, which gives an embedding $H = \Z^2 \to \pi(K)$ of the torus fundamental group into the knot group.
A choice of meridian $\mathfrak m$ and longitude $\mathfrak l$ gives a choice of generators of $H$.

Let $\rho \in \charvar K$ be a representation.
We can always conjugate in $\SL_2(\C)$ so that
\[
  \rho(\mathfrak m) =
  \begin{pmatrix}
    M & 1 \\
    0 & M^{-1}
  \end{pmatrix}
  ,
  \quad
  \rho(\mathfrak l) =
  \begin{pmatrix}
    L & q \\
    0 & L^{-1}
  \end{pmatrix}
\]
which determines a map $f : \charvar K \to \C^\times \times \C^\times$ via $f(\rho) = (M,L)$.
If we take the closure of the image of $f$, we obtain a union of $0$ and $1$-dimensional algebraic subvarieties.
The union of the one-dimensional components of the closure $\overline{f(\charvar K)}$ is a collection of algebraic curves in $\C^\times \times \C^\times$ cut out by the \emph{$A$-polynomial} $\apoly K(M,L)$.
It can be shown that $A$ is an integer polynomial defined up to multiplication by terms of the form $\pm M^a L^b$.

Now that we understand the geometric definition of the $A$-polynomial, it is easy to include the role of the fractional eigenvalues $\mu$.
\begin{defn}
  The \emph{$\ell$th generalized $A$-polynomial} of a knot $K$ is the polynomial $\apolyg K(\mu, L)$ given by replacing $M \to \mu^\ell$.
  We write $\apolycurveg K$ for the curve cut out by the polynomial $\apolyg K(\mu, L)$.
\end{defn}

We might expect that the restriction map $\charvar K \to \apolycurve K$ induced by the inclusion of the boundary will lose information, but it turns out that for certain interesting components of $\charvar K$ the map $f$ is a birational isomorphism, and similarly for $\charvarg K$.
In particular, it is an isomorphism for the commutative component (the simplest) and the geometric component, which we discuss in turn.

Let $K$ be a knot.
Any representation $\alpha \in \repvar K$ with abelian image is conjugate to one sending every meridian to a diagonal matrix
\[
  \begin{pmatrix}
    M & 0 \\
    0 & M^{-1}
  \end{pmatrix}
\]
for some $M \in \C$, which completely determines $\alpha$.
In this case the image of the longitude $\mathfrak l$ under $\alpha$ is always the identity matrix, so these representations correspond to a factor $(L - 1)$ of the $A$-polynomial of $K$.
For the generalized character variety such representations are determined instead by the choice of $\mu$ with  $\mu^{\ell} = M$, which again corresponds to a factor $(L-1)$ of the generalized $A$-polynomial.
  The \emph{commutative} component of the generalized $A$-polynomial curve $\apolycurveg K$ is the one cut out by the factor $(L-1)$ of $\apolyg K$.

A knot $K$ is hyperbolic when its complement admits a discrete, faithful representation $\rho_{\text{hyp}}$ into $\operatorname{PSL}_2(\C)$, equivalently into $\SL_2(\C)$.
We call the component of $\charvar K$ (and the coresponding component of $\charvarg K$) the \emph{geometric} component, and similarly for their images in $\apolycurve K$ and $\apolycurveg K$.

\begin{thm}
  For a hyperbolic knot $K$ the restriction map $f : \charvarg K \to \apolycurveg K$ is a birational isomorphism on the geometric component.
\end{thm}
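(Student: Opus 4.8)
The plan is to deduce the generalized statement from its classical counterpart by a base-change argument, using the genuine degree-one birationality in an essential way to make sense of the individual peripheral eigenvalues.

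First I would invoke the cited result \cite[Theorem 3.1]{MR1695208}: for a hyperbolic knot the ordinary restriction map $f \colon \charvar K \to \apolycurve K$ is a birational isomorphism on the geometric component. The consequence I want to extract is that, because this map has degree one, the meridian eigenvalue $M$ and longitude eigenvalue $L$ --- which a priori are only defined after choosing an eigenvector, hence only functions on a double cover --- descend to honest rational functions on the geometric component of $\charvar K$. In particular, along this component one of the two branches $\mu^\ell = M$, $\mu^\ell = M^{-1}$ of the defining equation $\mu^\ell + \mu^{-\ell} = \tr_{\mathfrak m}$ of $\charvarg K$ is singled out.

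Next I would realize both generalized objects as base changes of their classical counterparts along the \emph{same} finite étale cover $c \colon \C^\times \to \C^\times$, $\mu \mapsto \mu^\ell$, of the $M$-line. On the $A$-polynomial side this is immediate from the definition $\apolyg K(\mu, L) = \apoly K(\mu^\ell, L)$, so that $\apolycurveg K$ is the fibre product of $\apolycurve K$ with $\C^\times$ along $c$ and the $M$-coordinate map $\apolycurve K \to \C^\times$. On the character-variety side the defining fibre product is instead taken over $\C$ along $\tr_{\mathfrak m}$ and $\mu \mapsto \mu^\ell + \mu^{-\ell}$; but the relation $\tr_{\mathfrak m} = M + M^{-1}$ (valid because $f$ records peripheral eigenvalues) together with the rationality of $M$ from the previous step lets me rewrite the geometric component of $\charvarg K$ --- the irreducible component through the distinguished lift $(\rho_{\text{hyp}}, \mu_{\text{hyp}})$ with $\mu_{\text{hyp}}^\ell = M(\rho_{\text{hyp}})$ --- as the fibre product of the geometric component of $\charvar K$ with $\C^\times$ along the same cover $c$. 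Since $f$ commutes with the two maps to $\C^\times$, base-changing the birational isomorphism $f$ along $c$ yields the desired birational isomorphism: over the dense open where $f$ is an isomorphism the base change is again an isomorphism, and since $c$ is finite étale the irreducible-component structure is preserved, so the two distinguished geometric components correspond.

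The main obstacle is the bookkeeping around the eigenvalue involution $\iota \colon (M, L) \mapsto (M^{-1}, L^{-1})$. Over all of $\charvar K$ only the trace $M + M^{-1}$, not $M$ itself, is regular, so the projection $\charvarg K \to \charvar K$ is a priori a $2\ell$-fold cover (both $\mu^\ell = M$ and $\mu^\ell = M^{-1}$ occur), whereas $\apolycurveg K \to \apolycurve K$ is only $\ell$-fold; the discrepancy is reconciled precisely by the fact that $\apolycurve K$ is a double cover of its image in the trace coordinates via $\iota$, with the two sheets exchanged by $\mu \mapsto \mu^{-1}$. I must therefore check carefully that the genuine degree-one birationality of the classical $f$ does make $M$ rational on the geometric component, so that the correct $\ell$-fold branch $\mu^\ell = M$ splits off, and that this branch is irreducible over the function field of $\apolycurve K$ --- equivalently that $M$ is not a proper power there. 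Getting this matching exactly right is what guarantees that the base-changed map is genuinely degree one on the geometric components, rather than acquiring a hidden factor of $\ell$ or of $2$.
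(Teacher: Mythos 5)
Your conclusion is correct and you have correctly located the one genuine subtlety (the eigenvalue involution $\iota$), but the step you rely on to resolve it is false, and it fails already for the figure-eight knot. Dunfield's theorem concerns the restriction map to the character variety of the boundary torus, which is the quotient $(\C^\times\times\C^\times)/\iota$; degree one \emph{there} only forces the $\iota$-invariant functions such as $M+M^{-1}$ and $ML+ (ML)^{-1}$ to descend to the geometric component of $\charvar K$ --- it does not make $M$ itself rational. Concretely, for $K=\figeight$ the geometric factor $L^2M^4+L(-M^8+M^6+2M^4+M^2-1)+M^4$ is $\iota$-invariant and irreducible over $\C(M)$ (its discriminant in $L$ equals $-(M^2-1)^2(M^4+M^2+1)(-M^8+M^6+4M^4+M^2-1)$, which is not a square), so the geometric component of $\apolycurve K$ is a \emph{connected} double cover of the geometric component of $\charvar K$, with $\C(M,L)$ the quadratic extension of the character field on which $\iota$ acts by $M\mapsto M^{-1}$, $L \mapsto L^{-1}$. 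Hence $M$ is not a rational function on the geometric component of $\charvar K$, the $2\ell$-fold cover $\charvarg K \to \charvar K$ does not split into your two $\ell$-fold branches $\mu^\ell=M$ and $\mu^\ell=M^{-1}$, and the fibre product of the geometric component of $\charvar K$ with $\C^\times$ over the $M$-line is not even defined. The verification you defer to your final paragraph is exactly the point at which the argument breaks. (To be fair, the paper's own prose asserting that the geometric component of $\charvar K$ is ``birationally equivalent'' to a component of $\apolycurve K$ is imprecise in the same way; the correct classical statement identifies the $A$-curve with a double cover, equivalently with the eigenvalue-augmented character variety.)

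The degree count you worry about is reconciled in the opposite direction from the one you propose: the geometric component of $\apolycurveg K$ has degree $2\ell$, not $\ell$, over the geometric component of $\charvar K$, since it is an $\ell$-fold cover of the $A$-curve, which is itself a $2$-fold cover of the character variety. This matches the $2\ell$-fold cover $\charvarg K \to \charvar K$ on the nose. Both covers are pulled back from the single cover $\mu\mapsto\mu^\ell+\mu^{-\ell}$ of the meridian-trace line; on the $A$-polynomial side this cover is merely factored through the intermediate coordinate $M=\mu^\ell$, which \emph{is} a regular function there. This is the content of the paper's one-line proof (``the extra data in the generalized case is exactly the same for the character variety and $A$-polynomial''): the datum adjoined on each side is an ordered meridian eigenvalue together with an $\ell$th root of it, which is just $\mu$. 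Once you replace your splitting claim with this observation, your base-change framework does yield the theorem, with the explicit mutually inverse maps $(\mu,L)\mapsto(\rho,\mu)$ (using Dunfield to recover $\rho$ from the unordered pair $\{(M,L),(M^{-1},L^{-1})\}$) and $(\rho,\mu)\mapsto(\mu,L)$ (taking $L$ to be the longitude eigenvalue on the eigenvector whose meridian eigenvalue is $\mu^\ell$).
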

\begin{proof}
  The case of ordinary character varieties is a theorem \cite[Theorem 3.1]{MR1695208} of \citeauthor{MR1695208}.
  Since the extra data in the generalized case is exactly the same for the character variety and $A$-polynomial, $f$ is still an isomorphism.
\end{proof}

\begin{cor}
  For any hyperbolic knot $K$, $\kare$ is a rational function on the commutative and geometric components of the curve $\apolycurveg K$ cut out by $\apolyg K$.
\end{cor}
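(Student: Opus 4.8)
The plan is to upgrade the preceding theorem---that $\kare(K)$ is a \emph{function} on $\charvarg K$---to the statement that it is a \emph{rational} function, and then to push this rational function across the birational isomorphism $f$ on each of the two named components.

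First I would show that $\kare(K,-)$ is a rational function on the generalized representation variety $\repvarg K$. Fix a diagram $D$ of $K$. Given $\rho \in \repvarg K$, the coloring of $D$ it determines is recovered by factorizing the Wirtinger holonomies as $g = \chi^+(\chi^-)^{-1}$; reading off the parameters $\chi(K^\ell), \chi(E^\ell), \chi(F^\ell)$ from the entries of $g$ is a rational operation, so each coloring parameter is a rational function of the affine coordinates on $\repvarg K$, while the last parameter $\mu + \mu^{-1}$ is a polynomial in the coordinate $\mu$. By the Proposition computing $\rmat$, its matrix entries in the bases $\{K^iE^j\}$ of the simple bimodules are rational in exactly these parameters, and the evaluation and coevaluation morphisms are given by the explicit formulas of the construction and contribute rationally as well. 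Since $\kare(D')$ is a scalar multiple of the identity, that scalar equals any one of its diagonal entries, which is a rational function of the entries of the composite and hence of the coloring data; therefore $\kare(K,-)$ is rational on $\repvarg K$.

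Because $\kare(K,-)$ is invariant under the conjugation action of $\SL_2(\C)$, this rational function descends to a rational function on the GIT quotient $\charvarg K$. On the geometric component the preceding theorem gives that $f \colon \charvarg K \to \apolycurveg K$ is a birational isomorphism, hence induces an isomorphism of function fields, and the image of $\kare(K)$ is a rational function on the geometric component of $\apolycurveg K$. The commutative component is handled identically: there the abelian representations are parametrized by $\mu$ alone with $L = 1$, matching the $(L-1)$ factor of $\apolyg K$, so $f$ restricts to an isomorphism and $\kare(K)$ again descends to a rational function.

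The main obstacle is the rationality claim itself---specifically, ruling out any surviving multivaluedness. The delicate point is the auxiliary $\ell$th root $\beta$ of $\chi(K^\ell)$ that enters the modules $\irrep{\chi,\mu}$ and the pivotal element $K^{\ell-1}$. This is harmless precisely because we work with the \emph{bimodules} $\End{\irrep{\chi,\mu}}$, on which $K$ acts by conjugation so that all dependence on $\beta$ cancels; this is why only the root-free quantities $\chi(K^\ell), \chi(E^\ell), \chi(F^\ell)$ appear in the braiding Proposition, and any apparent $\beta$-dependence of the cup and cap contributions must cancel in the final scalar, since $\kare$ is already known to be independent of this choice. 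Once this cancellation is verified, the transport across the two birational isomorphisms is purely formal.
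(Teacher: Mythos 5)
Your proposal is correct and follows essentially the same route as the paper: reduce via the birational equivalence between the named components of $\apolycurveg K$ and $\charvarg K$, observe that the coloring characters $\chi_i$ are rational in the representation data, and conclude from the rationality of the braiding, cup, and cap matrix entries that the resulting scalar is rational. The only differences are cosmetic refinements --- you descend from $\repvarg K$ rather than working on $\charvarg K$ directly, and you explicitly address the cancellation of the auxiliary root $\beta$, which the paper leaves implicit.
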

\begin{proof}
  The geometric and commutative components of $\apolycurveg K$ are in birational equivalence with those of $\charvarg K$, so it suffices to show that  $\kare$ is a rational function on $\charvarg K$.
  Given a point $P \in \charvarg K$ we can choose a colored diagram $D$ representing $K$ whose holonomy representation has character $P$.
  The $\Fcent[\Omega]$-characters $\chi_i$ (which we can think of as an algebraic set by using their values on $K^r, E^r, F^r$, and $\Omega$) coloring the strands of $D$ are rational functions of the point $P$.
  The value of $\kare$ is in turn determined by the matrix coefficients of the linear maps assigned to the diagram, which are rational functions of the $\chi_i$, which proves our claim.
\end{proof}

\section{Examples}
We compute some examples of $\kare(K)$ for $\ell = 3$, thinking of it as a function on the generalized $A$-polynomial curve.
{\scshape Mathematica} code giving these computations is given in \cite{KCCThesis}.

While we know abstractly that there are rational functions from the geometric component of $\apolyg K$ to that of $\repvarg K$, determining it explicitly in order to compute the invariant can be rather involved.
(In contrast, for the commutative component the correspondence is quite straightforward.)
For now we give a few simple examples computed by hand.
A better understanding of the relationship between colored diagrams and the $A$-polynomial could make this process more systematic.

\subsection{The trefoil knot}
\begin{figure}
\begingroup%
  \makeatletter%
  \providecommand\color[2][]{%
    \errmessage{(Inkscape) Color is used for the text in Inkscape, but the package 'color.sty' is not loaded}%
    \renewcommand\color[2][]{}%
  }%
  \providecommand\transparent[1]{%
    \errmessage{(Inkscape) Transparency is used (non-zero) for the text in Inkscape, but the package 'transparent.sty' is not loaded}%
    \renewcommand\transparent[1]{}%
  }%
  \providecommand\rotatebox[2]{#2}%
  \newcommand*\fsize{\dimexpr\f@size pt\relax}%
  \newcommand*\lineheight[1]{\fontsize{\fsize}{#1\fsize}\selectfont}%
  \ifx\svgwidth\undefined%
    \setlength{\unitlength}{76.11516953bp}%
    \ifx\svgscale\undefined%
      \relax%
    \else%
      \setlength{\unitlength}{\unitlength * \real{\svgscale}}%
    \fi%
  \else%
    \setlength{\unitlength}{\svgwidth}%
  \fi%
  \global\let\svgwidth\undefined%
  \global\let\svgscale\undefined%
  \makeatother%
  \begin{picture}(1,2.23548442)%
    \lineheight{1}%
    \setlength\tabcolsep{0pt}%
    \put(0,0){\includegraphics[width=\unitlength,page=1]{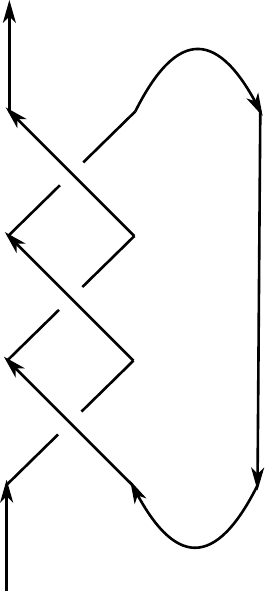}}%
    \put(0.0615566,0.04067406){\makebox(0,0)[lt]{\lineheight{1.25}\smash{\begin{tabular}[t]{l}$x_0$\end{tabular}}}}%
    \put(0.87194979,0.12085479){\makebox(0,0)[lt]{\lineheight{1.25}\smash{\begin{tabular}[t]{l}$y_0$\end{tabular}}}}%
    \put(0.76760633,2.11981775){\makebox(0,0)[lt]{\lineheight{1.25}\smash{\begin{tabular}[t]{l}$y_0$\end{tabular}}}}%
    \put(0.06154538,2.08057516){\makebox(0,0)[lt]{\lineheight{1.25}\smash{\begin{tabular}[t]{l}$x_0$\end{tabular}}}}%
  \end{picture}%
\endgroup%

  \caption{A Morse presentation of the cut-open trefoil knot.
    The colors of the unlabeled segments are determined by the choice of $x_0$ and $y_0$.
  }
  \label{fig:trefoil-morse}
\end{figure}

In its $2$-bridge presentation the fundamental group of the trefoil knot $\trefoil$ is
\[
  \left\langle a, b, w \middle| w = ba, aw = bw \right\rangle.
\]
Eliminating $w$ gives the usual description of the trefoil knot group as the three strand braid group.  Since the trefoil is a torus knot, it also has a presentation $\left\langle r,s \middle| r^3 = s^2 \right\rangle$ where $r = ab$ and $s = aba$.

These correspond%
\footnote{
  Here we take a slightly different convention on the Wirtinger generators and thus conjugate by $x_0^-$ instead of $x_0^+$.
  This is to match the conventions of \cite{KCCThesis}.
}
to the colored diagram of Figure \ref{fig:trefoil-morse} as
\[
  a = x_0^+ (x_0^-)^{-1}, \quad
  b = x_0^- y_0^+ (y_0^-)^{-1} (x_0^-)^{-1}.
\]

In terms of these generators the meridian is $\mathfrak m = a$ and the longitude is 
\[
  \mathfrak l = a^{-4} w w^* = a^{-4}(ba)(ab)
\]
where $w^*$ is the reverse of the word $w$ in $a,b$.
The $A$-polynomial is $(L -1) (1 + LM^6)$, so the $1$-dimensional part of the character variety comes in two components, the commutative component $(L - 1) = 0$ and a noncommutative component $(1-LM^6)=0$.
In this case there is only one noncommutative component and we can work out an isomorphism with the $A$-polynomial curve by hand.

These two components come from the following representations of the knot group:
$$a \mapsto \left(\begin{array}{cc} M & 0 \\ 0 & M^{-1} \end{array}\right)$$
$$b \mapsto \left(\begin{array}{cc} M & 0 \\ 0 & M^{-1} \end{array}\right)$$
and
$$a \mapsto \left(\begin{array}{cc} M & 0 \\ 0 & M^{-1} \end{array}\right)$$
$$b \mapsto \left(\begin{array}{cc} \frac{-M^{-2}}{M-M^{-1}} & 1 \\ -1-\frac{1}{(M-M^{-1})^2} & \frac{M^{2}}{M-M^{-1}}\end{array}\right).$$

\begin{rem}
  The Thurston model geometry for a torus knot complement is an $\widetilde{\mathrm{SL}_2(\mathbb{R})}$-structure, not a $\mathrm{PSL}_2(\mathbb{C})$ (hyperbolic) structure,
and a map from the knot group to $\widetilde{\mathrm{SL}_2(\mathbb{R})}$ does not give a map to $\mathrm{SL}_2(\mathbb{C})$.
They can be compared in the sense that both give a projective two-dimensional representation, but there may be several lifts to $\mathrm{SL}_2(\mathbb{C})$ of the map from the knot group to $\mathrm{PSL}_2(\mathbb{R})$.

This corresponds to the fact that in general there is not a preferred geometric component of the $\mathrm{SL}_2(\mathbb{C})$ character variety of a torus knot group.
For a $(p,q)$ torus knot there are $(p-1)(q-1)/2$ nonabelian components \cite[Theorem 3.1]{MR2553945}, each of which will have a ``geometric'' point.
The trefoil knot (the case $p = 3, q = 2$) is somewhat special: there's only one noncommutative component, which has a preferred geometric point corresponding to the usual map from the trefoil knot group to $SL_2(\mathbb{Z})$.
\end{rem}

On the geometric component the point $(L=1, M=1)$ corresponds to the $\widetilde{\mathrm{SL}_2(\mathbb{R})}$-structure in the previous remark.
Explicitly this point is given by the following representation of the knot group (this formula is difficult to recover directly from the above formulas because they behave poorly as $M\rightarrow 1$):
$$a \mapsto \left(\begin{array}{cc} 1 & 1 \\ 0 & 1 \end{array}\right)$$
$$b \mapsto \left(\begin{array}{cc} 1 & 0 \\ -1 & 1\end{array}\right).$$
(Note that the ``torus knot" generators $r$ and $s$ get sent to the usual order $6$ and order $4$ elements in $\mathbb{SL}_2(\mathbb{Z})$.)

Set $\mu^3=M$, so that a representation of the generalized knot group $\pi^{(3)}(\trefoil)$ is determined by $\mu$ together with the above formulas.  We can now compute the Kashaev-Reshetikhin knot invariant at a $3$rd root of unity for the trefoil:
\begin{align*}
  \kare^3_{\operatorname{comm}}(\trefoil) & = \mu^8 + 2\mu^6+4\mu^4+5\mu^2+7+5\mu^{-2}+4\mu^{-4}+2\mu^{-6}+\mu^{-8}, \\
  \kare^3_{\operatorname{non-comm}}(\trefoil) & = 3(\mu^2+1+\mu^{-2})^2.
\end{align*}
In particular the value of $\kare^3(\trefoil)$ on the geometric point corresponds to $\mu$ a third root of unity.  If $\mu=1$ we get $27$, while if $\mu$ is a primitive cube root of unity we get $0$.

\subsection{The figure eight knot}
Now let $K = \figeight$ be the figure-eight knot.
In its $2$-bridge presentation the fundamental group of \figeight is
$$\langle a, b,w | w=ab^{-1}a^{-1}b, aw=wb\rangle.$$
As before, in terms of the colored diagram of Figure \ref{fig:figure-eight-morse} these are
\[
  a = x_0^+ (x_0^-)^{-1}, \quad
  b = x_0^- y_0^+ (y_0^-)^{-1} (x_0^-)^{-1}.
\]
In this presentation we can choose the meridian as $\mathfrak m = a$ and longitude as
\[
  \mathfrak l = ww^* = (ab^{-1}a^{-1}b)(ba^{-1}b^{-1}a).
\]
The $A$-polynomial is $(L-1)(L^2 M^4  + L (-M^8 + M^6 + 2M^4 + M^2 - 1) + M^4)$.
Thus the $1$-dimensional part of the character variety comes in two components, the commutative component $(L-1)=0$ and the geometric component
$$L^2 M^4 + L (-M^8 + M^6 + 2M^4 + M^2 - 1) + M^4=0.$$
These two components come from the following representations of the knot group:
$$a \mapsto \left(\begin{array}{cc} M & 0 \\ 0 & M^{-1} \end{array}\right)$$
$$b \mapsto \left(\begin{array}{cc} M & 0 \\ 0 & M^{-1} \end{array}\right)$$
and
$$a \mapsto \left(\begin{array}{cc} M & 0 \\ 0 & M^{-1} \end{array}\right)$$
$$b \mapsto \left(\begin{array}{cc} \frac{M^3(L+M^2)}{(M^2-1)^2(M^2+1)} & \frac{-1+M^2+(3+L)M^4+M^6-M^8}{M(M^2-1)^2(M^2+1)} \\ -\frac{M+LM^3}{(M^2-1)(M^2+1)} & \frac{1-(2+L)M^4-M^6+M^8}{M(M^2-1)^2(M^2+1)}\end{array}\right).$$

Explicitly the representation corresponding to the hyperbolic holonomy is:
$$a \mapsto \left(\begin{array}{cc} 1 & 1 \\ 0 & 1 \end{array}\right)$$
$$b \mapsto \left(\begin{array}{cc} 1 & 0 \\ -e^{\frac{2 \pi i}{3}} & 1 \end{array}\right).$$
Set $\mu^3=M$, so that a representation of the generalized knot group $\pi^{(3)}(K)$ is determined by $\mu$ together with the above formulas.
We can now compute the Kashaev-Reshetikhin knot invariant at a $3$rd root of unity as
\begin{align*}
  \kare^3_{\operatorname{geom}}(K)
  &= 3 \mu ^{8}+9 \mu ^{6}+21 \mu ^{4}+30 \mu ^{2}+36 +30 \mu ^{-2}+21 \mu ^{-4}+9 \mu ^{-6}+3\mu^{-8}
  \\
  &= 3 (\mu^2 + \mu^{-2})(\mu + 1 + \mu^{-1})^3(\mu - 1 + \mu^{-1})^3.
\end{align*}
In particular the value of $KaRe_3(K)$ on the hyperbolic holonomy gives $162$ for $\mu=1$ and $0$ for $\mu$ a primitive cube root of unity.

In the computation, we make use of the fact that the rational functions on the geometric component are a quadratic extension of $\C(\mu)$. This allows us to efficiently work with the (rather complicated!) intermediate expressions that arise as we build the figure eight knot out of its constituent crossings.

\subsection{Twist knots at the hyperbolic holonomy}
\begin{figure}
\begingroup%
  \makeatletter%
  \providecommand\color[2][]{%
    \errmessage{(Inkscape) Color is used for the text in Inkscape, but the package 'color.sty' is not loaded}%
    \renewcommand\color[2][]{}%
  }%
  \providecommand\transparent[1]{%
    \errmessage{(Inkscape) Transparency is used (non-zero) for the text in Inkscape, but the package 'transparent.sty' is not loaded}%
    \renewcommand\transparent[1]{}%
  }%
  \providecommand\rotatebox[2]{#2}%
  \newcommand*\fsize{\dimexpr\f@size pt\relax}%
  \newcommand*\lineheight[1]{\fontsize{\fsize}{#1\fsize}\selectfont}%
  \ifx\svgwidth\undefined%
    \setlength{\unitlength}{271.87600136bp}%
    \ifx\svgscale\undefined%
      \relax%
    \else%
      \setlength{\unitlength}{\unitlength * \real{\svgscale}}%
    \fi%
  \else%
    \setlength{\unitlength}{\svgwidth}%
  \fi%
  \global\let\svgwidth\undefined%
  \global\let\svgscale\undefined%
  \makeatother%
  \begin{picture}(1,0.78362519)%
    \lineheight{1}%
    \setlength\tabcolsep{0pt}%
    \put(0,0){\includegraphics[width=\unitlength,page=1]{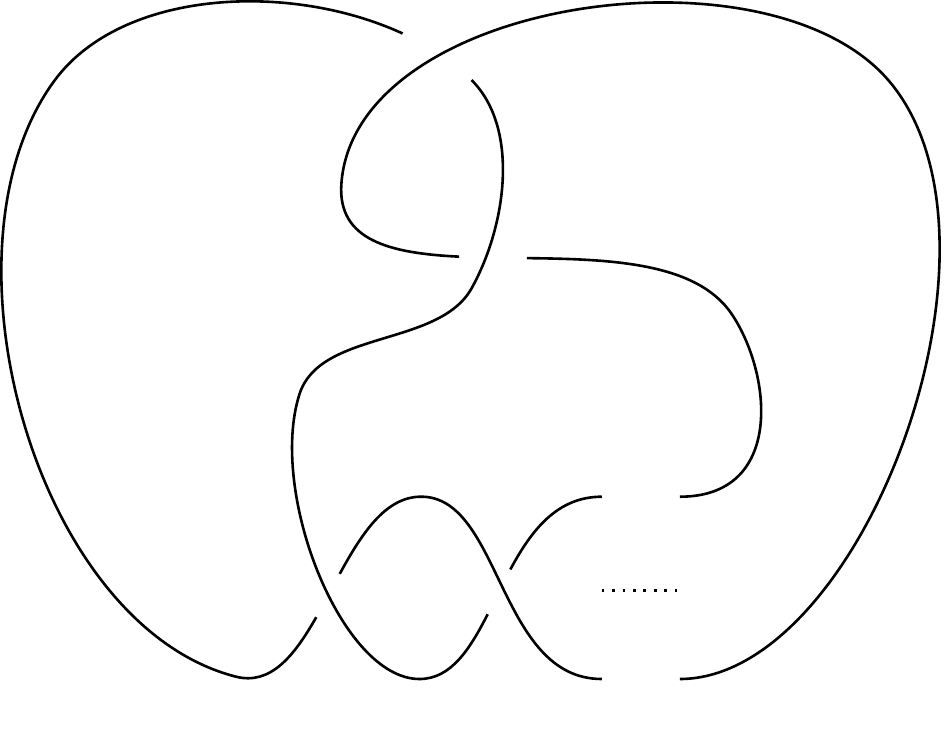}}%
    \put(0.3891805,0.00918098){\makebox(0,0)[lt]{\lineheight{1.25}\smash{\begin{tabular}[t]{l}$|m|$ crossings\end{tabular}}}}%
  \end{picture}%
\endgroup%

  \caption{The twist knot $T_m$ has $|m|$ crossings at the bottom, which are positive (as above) for $m > 0$ and negative otherwise.}
  \label{fig:twist-knot}
\end{figure}
Both of our previous examples are special cases of the \emph{twist knots} $T_m$.
These are a family of knots shown in Figure \ref{fig:twist-knot}; for $m = 0, 1, -1$ we obtain the unknot, the trefoil, and the figure-eight knot.
For $m \ne 0, 1$ the knot $T_n$ is hyperbolic.

\citeauthor{MR1831680} \cite{MR1831680} have carefully worked out the case where $m = -2n$ is even.
In that case fundamental group of a twist knot has the presentation
\[
  \pi_1(T_n) = 
  \left\langle a, b, w \middle| (b^{-1} a b a^{-1})^n, aw = wb \right\rangle
\]
and there is an explicit formula for the geometric point:
\begin{prop}
  Suppose that $m$ is even and not $0$ or $2$, and set $n = - m/2$.
  Then the hyperbolic point of the twist knot $\knotgrp(T_m)$ is given by
  \begin{align*}
    a&=\left(\begin{array}{cc}1 & 1 \\ 0 & 1\end{array}\right) \\
    b&=\left(\begin{array}{cc}1 & 0 \\ -(x-x^{-1}) & 1\end{array}\right)
  \end{align*}
  where $x$ depends on the sign of $n$:
  \begin{enumerate}
    \item If $n$ is positive, $x$ is the unique root of 
      \[
        \frac{x^{4n+2} + x^{4n+1} - x + 1}{x^2 +1}
        \text{ with }
        \frac{(2n - 2) \pi}{4n}
        <
        \arg(x)
        <
        \frac{(2n - 1) \pi}{4n}.
      \]
    \item If $n$ is negative, $x$ is the unique root of 
      \[
        \frac{-x^{-4n} + x^{-4n-1} + x + 1}{x^2 +1}
        \text{ with }
        \frac{(2n + 3) \pi}{4n}
        <
        \arg(x)
        <
        \frac{(2n + 2) \pi}{4n}.
      \]
  \end{enumerate}
\end{prop}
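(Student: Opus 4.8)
The plan is to realize the hyperbolic structure as a parabolic $\SL_2(\C)$-representation, reduce the defining group relation to a single Riley polynomial, match that polynomial with the one in the statement, and then isolate the discrete faithful solution; the last step is the genuinely geometric obstacle. First I would normalize the meridian $a$ to be the parabolic matrix $\begin{pmatrix} 1 & 1 \\ 0 & 1 \end{pmatrix}$ as in the statement and take the second generator $b = \begin{pmatrix} 1 & 0 \\ -(x-x^{-1}) & 1 \end{pmatrix}$. Setting $u := x - x^{-1}$, a short computation gives
\[
  C := b^{-1} a b a^{-1} = \begin{pmatrix} 1-u & u \\ -u^2 & u^2+u+1 \end{pmatrix},
\]
whose trace is $u^2 + 2 = x^2 + x^{-2}$. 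This explains the substitution: $C$ has eigenvalues $x^{\pm 2}$, so all powers of $C$ are controlled rationally by $x$.

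Next I would reduce the relation $aw = wb$, with $w = C^n$, to a scalar equation. By Cayley--Hamilton, $C^n = S_{n-1}(\tr C)\,C - S_{n-2}(\tr C)\,I$, where $S_k(y + y^{-1}) = (y^{k+1} - y^{-k-1})/(y - y^{-1})$ with $y = x^2$. Comparing entries in $aw = wb$ yields exactly two conditions, $w_{22} = 0$ and $w_{21} + u\,w_{12} = 0$. Since $C^n$ is a linear combination of $C$ and $I$ and the identity contributes nothing off the diagonal, the second condition reads $S_{n-1}(\tr C)\,(C_{21} + u\,C_{12}) = S_{n-1}(\tr C)\,(-u^2 + u\cdot u) = 0$, so it holds identically. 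The only surviving equation is $(C^n)_{22} = 0$, and substituting the Chebyshev expressions and clearing denominators I would obtain
\[
  (C^n)_{22} = \frac{x^{4n+2} + x^{4n+1} - x + 1}{x^{2n}(x^2 + 1)}.
\]
Thus $(C^n)_{22} = 0$ is precisely the vanishing of the stated polynomial; the factor $x^2 + 1$ in the denominator records the two degenerate values $x = \pm i$ at which the eigenvalue parametrization $x^{\pm 2}$ collapses. Reversing the twist crossings handles $n < 0$ and produces the mirror numerator $-x^{-4n} + x^{-4n-1} + x + 1$.

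The main obstacle is the final step: selecting the root $x$ that gives the \emph{discrete faithful} representation rather than one of its Galois conjugates. This cannot be decided algebraically, since every root of the Riley polynomial yields a parabolic $\SL_2(\C)$-representation and these are all conjugate over the rationals; distinguishing the geometric one is a statement about hyperbolic geometry. Here I would invoke the analysis of \cite{MR1831680}: the geometric representation is the unique root giving a discrete faithful (equivalently, maximal-volume) structure, and Thurston's hyperbolic Dehn surgery pins down its location. Concretely, realizing $T_m$ by $1/n$-filling the twist region of the Whitehead link complement, the geometric solution varies continuously with $n$ and converges, as $|n| \to \infty$, to the complete structure on that cusped limit; tracking this family forces $\arg(x)$ into the stated interval, whose endpoints tend to $\pi/2$ (that is, $x \to i$, $u \to 2i$) in agreement with the limiting cusp data. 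Establishing that exactly one root lies in this interval and that it is the geometric one is the crux of the argument, the preceding algebra only showing that some root of the displayed polynomial must be the answer.
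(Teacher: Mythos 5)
Your proposal is correct, and it actually supplies more than the paper does: the paper's entire proof of this proposition is the citation ``\cite[Section 3]{MR1831680}'', whereas you reconstruct the content of that reference. Your algebra checks out. With $u = x - x^{-1}$ the matrix $C = b^{-1}aba^{-1}$ is as you state, $\tr C = x^2 + x^{-2}$, the relation $aw = wb$ with $w = C^n$ reduces to $w_{22} = 0$ and $w_{21} + u\,w_{12} = 0$, the second condition is indeed automatic because $C_{21} + u\,C_{12} = -u^2 + u^2 = 0$, and the Cayley--Hamilton/Chebyshev expansion gives exactly $(C^n)_{22} = \bigl(x^{4n+2} + x^{4n+1} - x + 1\bigr)/\bigl(x^{2n}(x^2+1)\bigr)$ (I verified this identity; for $n<0$ it agrees with the displayed mirror numerator up to a unit $\pm x^k$, which is all that matters for the vanishing locus). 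You are also right to identify the root selection as the only non-routine step, and honest in flagging that your Dehn-surgery continuity argument is a sketch rather than a proof that exactly one root lies in the stated angular sector and that it is the discrete faithful one. Since the paper itself offers no argument for this either and simply defers to \cite{MR1831680} --- which is where that analysis is actually carried out --- your proof is at least as complete as the paper's, and the derivation of the Riley polynomial is a useful addition that the paper omits entirely.
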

\begin{proof}
  \cite[Section 3]{MR1831680}.
\end{proof}

\begin{rem}
  Choosing the correct root of $x$ is important here.
  The hyperbolic point of the character variety $\charvar K$ is a singular point of the $A$-polynomial curve $\apolycurve K$.
  Consequently every root of the above polynomial corresponds to the same singular point but to different virtual points on its smooth cover.
  Because these come from different components of $\charvar K$, hence $\charvarg K$, the invariant $\kare$ will in general take different values on different virtual points.
\end{rem}

The following table gives the values of $\kare(T_{-2n})$ evaluated at the hyperbolic point for both $\mu = 1$ and $\mu = \exp(2\pi i/3)$ a primitive third root of unity.
\[
  \begin{array}{c|c|c}
    n &KaRe_3(K_{-2n})(\rho_\text{hyp},1)&KaRe_3(K_{-2n})(\rho_{\text{hyp}}, \exp(2\pi i/3))\\
    \hline
    1 & 162&0 \\
    2 & 168.986 - 54.5257 i&0\\
    3 &418.375+ 30.4018 i&0 \\
    4 & 352.139+ 47.9167 i&0\\
    5 & 673.376- 37.3665 i&0 \\
    6 & 1112.4+ 63.914 i&0\\
    7 &1059.55+ 75.2 i&0 \\
  \end{array}
\]
We do not know any reason that the second column is uniformly zero.
The pattern appears to persist for $\ell = 5$.

\printbibliography

\end{document}